\def\rouge{ \textcolor{red} }
\newtheorem{thm}[subsection]{Theorem}
\newtheorem{lemma}[subsection]{Lemma}
\newcommand{\draftnote}[1]{}
\def\tstar {\,\,\tilde{*}\,\,}
\def\tsharp{\,\,\widetilde{\#}\,\,}
\def\d{\succ}
\def\g{\prec}
\def\YY{Y\!\!\!\! Y}
\def\t{\otimes}
\def\Sy{\mathbb{S}}
\def\YY{{\mathbb{Y}}}
\def\PP{{\mathcal{P}}}
\def\XXX{{\mathcal{X}}}
\def\ZZZ{{\mathcal{Z}}}
\def\Id{\mathrm{Id }}
\def\Im{\mathop{\rm Im }}
\def\Vect{\mathop{\rm Vect }}
\def\dim{\mathop{\rm dim }}
\def\alg{\textrm{-alg}}
\def\KK{\mathbb{K}}
\def\Fin{\mathtt{Fin}}
\def\Vect{\mathtt{Vect}}
\def\arbreA{\vcenter{\xymatrix@R=3pt@C=3pt{
&& \\
&*{}\ar@{-}[ur] \ar@{-}[ul] \ar@{-}[d]     &\\
&&
}}}
\newenvironment{proo}{\begin{trivlist} \item{\emph{Proof.}}}
  {\hfill $\square$ \end{trivlist}}
\begin{document}

\author[N. Bergeron]{Nantel Bergeron}
\address{Department of Mathematics and Statistics\\
2029 TEL Building\\
York University\\
North York, Ontario M3J 1P3, Canada}
\email{bergeron@yorku.ca }

\author[J.-L. Loday]{Jean-Louis Loday}
\address{Institut de Recherche Math\'ematique Avanc\'ee\\
    CNRS et Universit\'e de Strasbourg\\
    7 rue R. Descartes\\
    67084 Strasbourg Cedex, France}
\email{loday@math.unistra.fr}

\title{The symmetric operation in a free pre-Lie algebra is magmatic}
\subjclass[2000]{16A24, 16W30, 17A30, 18D50, 81R60.}
\keywords{Pre-Lie algebra, Jordan algebra, magmatic algebra, dendriform algebra, duplicial algebra, operad, planar tree }


\begin{abstract} A pre-Lie product is a binary operation whose associator is symmetric in the last two variables. 
As a consequence its antisymmetrization is a Lie bracket. In this paper we study the symmetrization of the pre-Lie
 product. We show that it does not satisfy any other universal relation than commutativity. It means that the map
  from the free commutative-magmatic algebra to the free pre-Lie algebra induced by the symmetrization of the pre-Lie
   product is injective. This result is in contrast with the associative case, where the symmetrization gives rise to the notion of Jordan algebra.
   We first give a self-contained proof. Then we give a proof which uses the properties of dendriform and duplicial algebras.
\end{abstract}

\maketitle

\section*{Introduction} \label{S:int} 

A pre-Lie algebra is a vector space equipped with a binary operation $x*y$ whose associator is right-symmetric. Its name
 comes from the fact that the anti-symmetrization $[x,y]:= x*y-y*x$ of this binary operation is a Lie bracket. In this paper we
  investigate the symmetrization $x\#y:= x*y+y*x$ of the pre-Lie product. We show that, contrarily to the bracket, it does not
   satisfy any universal relation (but commutativity of course). In other words the map of operads
$$ComMag \to preLie$$
induced by the symmetrization is injective. Here $ComMag$ stands for the operad encoding the algebras equipped with a commutative
 binary operation (sometimes called commutative nonassociative algebras in the literature). We give two different proofs of this result. The first one is self-contained and rely on the combinatorics of trees. The second one uses the dendriform algebras. More precisley we prove that the map of operads 
 $$Mag\to Dend$$
 is injective by using the theory of generalized bialgebras for duplicial algebras. Vladimir Dotsenko informed us that he found an alternative proof by using Groebner basis \cite{Dotsenko}.

As a byproduct of our proof we show that the symmetrization of the binary operation generating the operad $NAP$ induces an injective morphism
$$ComMag \to NAP.$$
We recall that a $NAP$-algebra is equipped with a binary operation satisfying the quadratic relation $(xy)z=(xz)y$.

\section{Prerequisite on operads} Let $\Fin$ be the category of finite sets, and $\Vect$ be the category of vector spaces over a field $\KK$. 
Let $\underline{n}=\{1, \ldots , n\}$ be the standard finite set of cardinality $n$. We will use sometimes the fact that it is equipped with a total
 order. Any functor $ \Fin\to \Vect, S\mapsto {\mathcal O}[S]$, gives rise to an endofunctor of $\Vect$ by the formula
$${\mathcal O}(V) := \bigoplus_{n\geq 1}{\mathcal O}[\underline{n}]\t_{\Sy_{n}}V^{\t n}.$$
By definition an algebraic operad is such a functor equipped with a monoid structure, i.e.\ transformations of functors 
$$\gamma :{\mathcal O}\circ {\mathcal O} \to {\mathcal O}\quad \textrm{and} \quad \iota:\Id \to {\mathcal O}$$
such that $\gamma$ is associative and $\iota$ is a unit for $\gamma$. We observe that ${\mathcal O}(n):= {\mathcal O}[\underline{n}]$
 is a module over the symmetric group $\Sy_{n}$.

We recall that a type of algebras gives rise to an algebraic operad by taking the free algebra functor (cf.\ for instance \cite{MSS, LV}).

\section{Pre-Lie algebra} 

A \emph{pre-Lie algebra} is  a vector space $L$ together with a binary operation $*$ satisfying the
  relation
$$(x* y)* z-x*(y* z)=(x* z)* y-x*(z* y), \quad \forall x,y,z\in L.$$

The operad $\PP=preLie$ has been described in terms of trees by Chapoton and Livernet in \cite{ChapotonLivernet} as follows.
Given a finite set $S$ of cardinality $n$ let $\PP[S]$ be the  vector space spanned by the labelled rooted  trees on $n$
  vertices with distinct labels chosen in $S$. For example the space $\PP[\{a,b,c\}]$ is the linear span of the following trees:
   $$
 \raise -10pt\hbox{ \begin{picture}(40,30)
       \put(10,6){\circle{3}}   \put(0,16){\circle*{3}}   \put(20,16){\circle*{3}}
       \put(10,6){\line(-1,1){10}}    \put(10,6){\line(1,1){10}}
       \put(10,-1){$\scriptscriptstyle a$}   \put(-4,20){$\scriptscriptstyle b$}   \put(20,20){$\scriptscriptstyle c$}  
      \end{picture}} 
 \raise -10pt\hbox{  \begin{picture}(40,30)
       \put(10,6){\circle{3}} \put(0,16){\circle*{3}} \put(20,16){\circle*{3}}
       \put(10,6){\line(-1,1){10}}  \put(10,6){\line(1,1){10}}
       \put(10,-1){$\scriptscriptstyle b$}   \put(-4,20){$\scriptscriptstyle a$} \put(20,20){$\scriptscriptstyle c$}  
      \end{picture}} 
 \raise -10pt\hbox{ \begin{picture}(40,30)
       \put(10,6){\circle{3}} \put(0,16){\circle*{3}}  \put(20,16){\circle*{3}}
       \put(10,6){\line(-1,1){10}}  \put(10,6){\line(1,1){10}}
       \put(10,-1){$\scriptscriptstyle c$}   \put(-4,20){$\scriptscriptstyle a$} \put(20,20){$\scriptscriptstyle b$}  
      \end{picture}} 
 \raise -10pt\hbox{ \begin{picture}(20,30)
       \put(0,6){\circle{3}} \put(0,16){\circle*{3}} \put(0,26){\circle*{3}}
       \put(0,6){\line(0,1){20}}
       \put(3,5){$\scriptscriptstyle a$} \put(3,17){$\scriptscriptstyle b$} \put(3,27){$\scriptscriptstyle c$}  
      \end{picture}} 
 \raise -10pt\hbox{ \begin{picture}(20,30)
       \put(0,6){\circle{3}}  \put(0,16){\circle*{3}} \put(0,26){\circle*{3}}
       \put(0,6){\line(0,1){20}}
       \put(3,5){$\scriptscriptstyle a$}  \put(3,17){$\scriptscriptstyle c$}  \put(3,27){$\scriptscriptstyle b$}  
      \end{picture}} 
 \raise -10pt\hbox{ \begin{picture}(20,30)
       \put(0,6){\circle{3}}  \put(0,16){\circle*{3}} \put(0,26){\circle*{3}}
       \put(0,6){\line(0,1){20}} 
        \put(3,5){$\scriptscriptstyle b$}  \put(3,17){$\scriptscriptstyle c$} \put(3,27){$\scriptscriptstyle a$}  
      \end{picture}} 
 \raise -10pt\hbox{ \begin{picture}(20,30)
       \put(0,6){\circle{3}}  \put(0,16){\circle*{3}} \put(0,26){\circle*{3}}
       \put(0,6){\line(0,1){20}}
       \put(3,5){$\scriptscriptstyle b$}  \put(3,17){$\scriptscriptstyle a$}  \put(3,27){$\scriptscriptstyle c$}  
      \end{picture}} 
 \raise -10pt\hbox{ \begin{picture}(20,30)
       \put(0,6){\circle{3}} \put(0,16){\circle*{3}}  \put(0,26){\circle*{3}}
       \put(0,6){\line(0,1){20}}
       \put(3,5){$\scriptscriptstyle c$}  \put(3,17){$\scriptscriptstyle a$}  \put(3,27){$\scriptscriptstyle b$}  
      \end{picture}} 
 \raise -10pt\hbox{ \begin{picture}(20,30)
       \put(0,6){\circle{3}}  \put(0,16){\circle*{3}} \put(0,26){\circle*{3}}
       \put(0,6){\line(0,1){20}}
       \put(3,5){$\scriptscriptstyle c$}   \put(3,17){$\scriptscriptstyle b$}   \put(3,27){$\scriptscriptstyle a$}  
      \end{picture}} 
 $$

The pre-Lie product is obtained by the following process.
Given two disjoint sets $I,J$ and two labelled trees $T\in\PP[I]$ and $Y\in\PP[J]$ one defines
$$T*Y:=
\def\objectstyle{\scriptstyle}
\def\labelstyle{\scriptstyle}
\sum_{t\in Vert(T)}
\vcenter{\xymatrix@-1.5pc{
*++[o][F-]{Y}\ar@{-*{\bullet}}[d]_>>{t}\\
*++[o][F-]{T}}}$$
where the sum is over all possible ways of {\sl grafting} the root of
the tree $Y$ on a vertex $t$ of $T$ by an edge. The root of the result is the root of $T$. For example
   $$
   \raise -10pt\hbox{ \begin{picture}(25,25)
       \put(10,6){\circle{3}}   \put(0,16){\circle*{3}}   \put(20,16){\circle*{3}}
       \put(10,6){\line(-1,1){10}}    \put(10,6){\line(1,1){10}}
       \put(10,-1){$\scriptscriptstyle c$}   \put(-4,20){$\scriptscriptstyle a$}  
        \put(20,20){$\scriptscriptstyle d$}  
       \end{picture}} \,*
    \raise -5pt\hbox{ \begin{picture}(10,5)
        \put(0,6){\circle{3}}     \put(3,5){$\scriptscriptstyle b$}    \end{picture}}
    \quad = \quad   
    \raise -10pt\hbox{ \begin{picture}(25,30)
       \put(10,6){\circle{3}}   \put(0,16){\circle*{3}}   \put(20,16){\circle*{3}}      \put(0,26){\circle*{3}}
       \put(10,6){\line(-1,1){10}}    \put(10,6){\line(1,1){10}}                           \put(0,16){\line(0,1){10}}        
       \put(10,-1){$\scriptscriptstyle c$}   \put(-5,18){$\scriptscriptstyle a$}  
        \put(20,20){$\scriptscriptstyle d$}                                           \put(-4,30){$\scriptscriptstyle b$} 
       \end{picture}}
    \quad + \quad   
    \raise -10pt\hbox{ \begin{picture}(25,30)
       \put(10,6){\circle{3}}   \put(0,16){\circle*{3}}   \put(20,16){\circle*{3}}      \put(10,16){\circle*{3}}
       \put(10,6){\line(-1,1){10}}    \put(10,6){\line(1,1){10}}                             \put(10,6){\line(0,1){10}}        
       \put(10,-1){$\scriptscriptstyle c$}   \put(-4,20){$\scriptscriptstyle a$}  
        \put(20,20){$\scriptscriptstyle d$}                                           \put(7,20){$\scriptscriptstyle b$} 
       \end{picture}}
     \quad + \quad   
     \raise -10pt\hbox{ \begin{picture}(25,30)
       \put(10,6){\circle{3}}   \put(0,16){\circle*{3}}   \put(20,16){\circle*{3}}      \put(20,26){\circle*{3}}
       \put(10,6){\line(-1,1){10}}    \put(10,6){\line(1,1){10}}                           \put(20,16){\line(0,1){10}}        
       \put(10,-1){$\scriptscriptstyle c$}   \put(-5,20){$\scriptscriptstyle a$}  
        \put(22,18){$\scriptscriptstyle d$}                                           \put(20,30){$\scriptscriptstyle b$} 
       \end{picture}}\,.
 $$

Any pre-Lie algebra $L$ gives rise to a Lie algebra whose bracket
is defined by the antisymmetric product
$$[x,y]:=x*y-y*x.$$
 This gives a morphism of operads 
$$Lie \longrightarrow preLie.$$
This morphism is injective and has been well studied in the literature, cf.\ for instance \cite{ChapotonLivernet} and \cite{Markl}.

In this paper we consider the symmetric product 
$$x\#y:=x*y+y*x.$$
 This new binary operation is related to the commutative-magmatic operad $ComMag$,
where a $ComMag$-algebra is  a vector space $A$ together with a binary operation $\cdot$ satisfying the relation
$$x\cdot y=y\cdot x, \quad \forall x,y\in A.$$
The symmetric product $\#$ in $preLie$ gives a morphism of operads 
$$\Phi\colon ComMag \longrightarrow preLie.$$
This has been much less studied and our main theorem is to show that this morphism is injective. Let us recall that $ComMag(n)$ is of dimension $(2n-3)!!= 1\times 3\times \cdots \times (2n-3)$ and that $preLie(n)$ is of dimension $n^{n-1}$.

An element in $ComMag[S]$ is a product $x\cdot y$ where $x\in ComMag[I], y\in ComMag[J]$ for some decomposition $S=I\cup J$ (disjoint union). Let us suppose that $S$ is equipped with a total order. Thanks to the commutativity of the product we can assume that the label $\max(S)$ is in $J$. In fact, there is a unique way to write an element in $ComMag[S]$ such that $\max(S)$ is on the right side and the two factors $x$ and $y$ satisfy also this property. We call it the normalized writing. For instance the elements of $ComMag(3)$ are written:
$$\quad (1\cdot 2)\cdot 3, \quad 1\cdot (2\cdot 3),\quad   2\cdot (1\cdot 3).$$

 We can now define $\Phi\colon ComMag\to \PP$ explicitly. If $S=\{a\}$ is of cardinality one, then $\Phi(a)=a$. If $|S|>1$, then any $Y\in ComMag[S]$ decomposes uniquely as $Y=V\cdot W$. We let $\Phi(Y):=\Phi(V)\#\Phi(W)$ and extend linearly to $ComMag[S]$.

\section{Symmetrizing the pre-Lie product}\label{mainsection}

The main result of this paper is the following

\begin{thm}\label{mainthm}
The morphim of operads  $\Phi\colon ComMag\to  preLie$ induced by the symmetric product $x\# y$  is injective.
\end{thm}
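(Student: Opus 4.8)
The goal is equivalent to showing, for each $n$, that the vectors $\Phi(Y)$ are linearly independent in $preLie[\underline n]$ as $Y$ ranges over the normalized $ComMag$-monomials, which form a basis of $ComMag[\underline n]$. Since $\Phi(V\cdot W)=\Phi(V)*\Phi(W)+\Phi(W)*\Phi(V)$ and $*$ is the grafting operation, each $\Phi(Y)$ is an explicit nonnegative-integer combination of labelled rooted trees. The first move is a projection onto the ``top-rooted'' part. Because the normalized writing keeps $\max(S)=n$ in the right factor at every stage, a tree of $\Phi(V\cdot W)$ can have root $n$ only through the summand $\Phi(W)*\Phi(V)$, and only from the trees of $\Phi(W)$ already rooted at $n$; writing $\Psi(Y)$ for the part of $\Phi(Y)$ supported on trees with root $n$, this yields the clean recursion $\Psi(V\cdot W)=\Psi(W)*\Phi(V)$ with base $\Psi(\{n\})=\bullet_n$. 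As $\Psi(Y)$ is a sub-sum of $\Phi(Y)$, it suffices to prove that the $\Psi(Y)$ are linearly independent, and I would prove injectivity of $\Phi$ by induction on $n$, so that $\Phi$ may be assumed injective on the strictly smaller label sets that occur below.

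Unfolding the recursion, $\Psi(Y)=(\cdots(\bullet_n*\Phi(V_k))\cdots)*\Phi(V_1)$, where $(V_1,\dots,V_k)$ is the ordered sequence of $ComMag$-monomials carried by the blocks of a partition of $\underline n\setminus\{n\}$ read off along the right spine of $Y$ (with $V_k$ grafted first, adjacent to the root $n$). The plan is to filter trees by the out-degree of the root $n$, or equivalently by the set partition of $\underline n\setminus\{n\}$ given by the label sets of the subtrees hanging at $n$. The top component, where the root has out-degree $k$, is obtained by grafting a tree of each $\Phi(V_i)$ directly onto $n$; since the blocks are disjoint one reads off each factor from the labels of the corresponding subtree, so by the inductive injectivity of $\Phi$ on the $V_i$ this top part recovers the partition and the monomials $V_i$ themselves. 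Deeper graftings only merge blocks, so the support of $\Psi(Y)$ meets only coarsenings of its own partition; this refinement-triangularity separates the families attached to different groups $\{V_1,\dots,V_k\}$ and reduces the whole problem to a single group: prove that the $k!$ vectors $\Psi(Y)$ obtained by letting the blocks run through all orders of the spine are linearly independent.

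This within-group statement is the heart of the matter, and I expect to prove it by a second induction that peels the spine from the root inward: identify the innermost block $V_k$ adjacent to $n$ from the low out-degree part of $\Psi(Y)$, strip it, and recurse on $(V_1,\dots,V_{k-1})$, which is itself a $\Psi$ of the same shape on a smaller label set. The genuine obstacle is precisely the overlap created by the symmetrization $\#$, which duplicates terms and interchanges the roles of the two factors; this makes the supports of different $\Psi(Y)$ intersect heavily and, crucially, rules out any proof by a single leading monomial. Indeed already for $n=3$ the three vectors $\Psi$ are two-term sums drawn from only three trees, namely $3\langle 1,2\rangle$, $3\to 1\to 2$ and $3\to 2\to 1$, and the three two-element subsets of a three-element set cannot have pairwise distinct maxima under any total order; one must therefore argue linear independence directly through the out-degree filtration above rather than by a triangularity shortcut on individual trees.
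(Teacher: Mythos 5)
Your first paragraph is sound: projecting onto trees rooted at $n=\max(S)$ is linear, the recursion $\Psi(V\cdot W)=\Psi(W)*\Phi(V)$ is correct, and the support of $\Psi(Y)$ does consist only of coarsenings of the block partition of $Y$. The gap is the claim that this refinement-triangularity ``separates the families attached to different groups and reduces the whole problem to a single group.'' It does not. The finest stratum of $\Psi(Y)$ is \emph{symmetric} in the blocks (the subtrees at a vertex form an unordered set), so projecting a dependency $\sum_Y c_Y\Psi(Y)=0$ onto the finest strata yields exactly one equation per group, namely $\sum_\sigma c_{Y_\sigma}=0$ over the $k!$ orderings, and nothing about individual orderings. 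To distinguish orderings you must descend to coarser strata, and there distinct groups interfere: already for $n=3$ the tree $3\to 2\to 1$ lies in the support both of $\Psi(1\cdot(2\cdot 3))$ (group with blocks $\{1\},\{2\}$) and of $\Psi((1\cdot 2)\cdot 3)$ (group with the single block $\{1,2\}$). So the system of equations does not decouple group by group, and the within-group statement you yourself call the heart of the matter --- for which you only describe a hoped-for ``peeling'' induction and then concede the obstacle --- cannot even be invoked as a black box: it has to be established jointly with the cross-group interference, which is precisely the hard part and is absent.

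That this is not a repairable bookkeeping issue is shown by characteristic $2$, where the theorem itself is false: there $x\# y$ coincides with the Lie bracket, so $\Phi$ kills the Jacobiator; concretely your three vectors for $n=3$ satisfy $\Psi((1\cdot 2)\cdot 3)+\Psi(1\cdot(2\cdot 3))+\Psi(2\cdot(1\cdot 3))=0$, since they sum to twice the sum of the three trees. Your outline --- project to $\Psi$, separate groups via finest strata, prove within-group independence --- never uses invertibility of $2$, and in this example each group taken in isolation \emph{is} linearly independent over a field of characteristic $2$; hence no argument following the decoupled outline can be correct, and the invertibility of $2$ must enter exactly in the cross-group linear system you leave unexamined. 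This is also where your route diverges essentially from the paper's: the paper first trades $*$ for the root-grafting ($NAP$) operation $\tilde *$ via the root-degree filtration, after which the unsymmetrized map sends each $ComMag$ monomial to a \emph{single} tree (the type-$A$ trees), and the symmetrized map becomes triangular, with leading coefficient $1$, for the explicit order given by $d(T)$. You correctly observe that in your setting no leading-monomial argument can exist (your three two-term vectors on three trees), but that observation is the reason the missing step is genuinely difficult, not a license to replace it by the phrase ``argue linear independence directly through the out-degree filtration.''
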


\begin{proof} {\bf Plan of the proof.} We introduce the auxiliary operation $x \tstar y$ on the labelled trees as follows. For any labelled trees $T$ and $Y$ (labelled on disjoint finite sets), $T  \tstar  Y$ is the unique tree obtained by connecting the root of $Y$ to the root of $T$. The root of $T \tstar Y$ is the former root of $T$:
$$T\tstar  Y:=
\def\objectstyle{\scriptstyle}
\def\labelstyle{\scriptstyle}
\vcenter{\xymatrix@-1.5pc{
&*++[o][F-]{Y}\\
*++[o][F-]{T}&
}}  \hskip -40pt
 \raise -15pt\hbox{ \begin{picture}(60,35)
       \put(4,-2){\circle*{3}}   \put(4,-2){\line(1,1){23}}   \put(7,-6){$\scriptstyle t$}
      \end{picture}}
$$

 The proof of the theorem is divided into three steps.

-- step 1. We show that it is sufficient to prove injectivity for the symmetrization of the operation $\tstar$ (in place of $*$). 

-- step 2. We identify a subset of $\PP[S]$ which is in bijection with $ComMag[S]$ and we show that $\tstar$ realizes this bijection.

-- step 3. We show that the symmetrization of the operation $\tstar$ induces an injective map $ComMag[S] \to \PP[S]$.

\medskip

{\bf Step 1.} We need to show that the linear maps $\Phi_S\colon ComMag[S]\to \PP[S]$ are injective for all finite sets $S$.  We first introduce a filtration on  $\PP[S]$.
For a tree $T\in\PP[S]$, we say that $T$ is of {\sl degree} $d$ if the number of subtrees connected to the root of $T$ is $d$. We write $\deg(T)=d$.
This induces a filtration of $\PP[S]$. The pre-Lie product with respect to this filtration is
 $$ T * Y = T \tstar  Y + \hbox{ lower degree terms}$$
 where $T  \tstar  Y$ is the unique tree obtained by connecting the root of $Y$ to the root of $T$. So, for the purpose of this proof, we may use the product $\tstar $ instead of $*$. 
 
 \medskip
 
 {\bf Step 2.} We define a map $\Psi : ComMag[S] \to \PP[S]$, we  identify its image, and finally we prove that it is injective.
 
 Let $S$ be a totally ordered finite set. Let $t= x\cdot y$ be an element in $ComMag[S]$, whose writing has been normalized, that is $\mathrm{max}(S)$ is a label of $y$. We define inductively
 $$\Psi(t) := \Psi(x) \tstar \Psi(y),$$
 starting with $\Psi(1)= \circ_{1}$. So, we get
 $$\Psi(1\cdot 2)= 
  \raise -10pt\hbox{ \begin{picture}(40,20)
       \put(0,6){\circle{3}} \put(0,16){\circle*{3}}
       \put(0,6){\line(0,1){10}}
       \put(3,5){$\scriptscriptstyle 1$} \put(3,17){$\scriptscriptstyle 2$}
      \end{picture}},
      \Psi((1\cdot 2)\cdot 3)=      
 \raise -10pt\hbox{ \begin{picture}(40,30)
       \put(10,6){\circle{3}}   \put(0,16){\circle*{3}}   \put(20,16){\circle*{3}}
       \put(10,6){\line(-1,1){10}}    \put(10,6){\line(1,1){10}}
       \put(10,-1){$\scriptscriptstyle 1$}   \put(-4,20){$\scriptscriptstyle 2$}   
\put(20,20){$\scriptscriptstyle 3$}
      \end{picture}},
\Psi(1\cdot (2\cdot 3))=
 \raise -10pt\hbox{ \begin{picture}(20,30)
       \put(0,6){\circle{3}} \put(0,16){\circle*{3}} \put(0,26){\circle*{3}}
       \put(0,6){\line(0,1){20}}
       \put(3,5){$\scriptscriptstyle 1$} \put(3,17){$\scriptscriptstyle 2$} 
\put(3,27){$\scriptscriptstyle 3$}
      \end{picture}},
      \Psi(2\cdot (1\cdot 3))=
  \raise -10pt\hbox{ \begin{picture}(20,30)
       \put(0,6){\circle{3}}  \put(0,16){\circle*{3}} \put(0,26){\circle*{3}}
      \put(0,6){\line(0,1){10}}  \put(0,16){\line(0,1){10}}
       \put(3,5){$\scriptscriptstyle 2$}  \put(3,17){$\scriptscriptstyle 1$}  
\put(3,27){$\scriptscriptstyle 3$}
      \end{picture}}
      $$
   
    Any tree $T\in\PP[S]$ is uniquely  determined by its root $r$ and a set of subtrees $\{T_1,T_2,\ldots,T_\ell\}$ attached to the root. We denote this decomposition as $T=(r,\{T_1,T_2,\ldots,T_\ell\})$.  For 
$i\in\{1,2,\ldots,\ell\}$ let  $b_i=\max(T_i)$. We can assume that $b_1<b_2<\cdots 
<b_\ell$.
For a given set $S$, we denote by $A[S]$ the linear span of the trees in the image $\Psi[S]$ and by $B[S]$ the linear span of the trees of $\PP[S]$ not in $A[S]$. We have that $\PP[S]=A[S]\oplus B[S]$.
For any tree $T\in\PP[S]$, either the root is labelled with $\max(T)$ or there is a 
unique factorization
   \begin{equation}\label{eq:TX}
     T= X_1 \tstar  X_2
   \end{equation}
   where $\max(T)=b_\ell$ is a label in $X_2=T_\ell$.
For the factorization in Eq~(\ref{eq:TX})  to exist in $A[S]$ we must have that $r$ is not 
$\max(T)$. That is $r<b_\ell$.
We remark that recursively, $X_1$ will be factorizable if $r> b_{\ell-1}$. As we repeat 
this we see that 
$T\in B[S]$ if $r>b_1$. In particular, we get a characterization of the $T\in A[S]$ as 
follows.
    
{\bf Claim 1.} The image $\Psi(x\cdot y)$ is a tree $T=(r,\{T_1,T_2,\ldots,T_\ell\})$, which is characterized by the following properties:

{\rm (a)} $T_i\in A[S_i]$ for all $i\in\{1,2,\ldots,\ell\}$

{\rm (b)} $r<b_1$.

{\it Proof of claim 1}. The tree $T_{i}$ either comes from $\Psi(x)$ or is $\Psi(y)$. In both cases it is in $A[S_{i}]$ by induction. The index $b_{i}$ either come from  $\Psi(x)$ or is $\max \Psi(y)= \max(y)$. In both cases we have $r<b_{i}$ as expected.

{\bf Claim 2.} $\Psi$ is a bijection onto its image $A[S]$.

{\it Proof of claim 2}. Starting with $T\in \Im \Psi$, by claim 1 we have $T=T_{1} \tstar T_{2}$ where $\max(T)\in T_{2}$ and $T_{1}=\Psi(x_{1}), T_{2}=\Psi(x_{2})$. So we have $T=\Psi(x_{1}\cdot x_{2})$. So we have constructed $\Psi^{-1}$. This shows that $\Psi$ is an isomorphism from $ComMag[S]$ to $A[S]$.

\medskip

 {\bf Step 3.} 
 For $T\in A[S]$ and its factorisation $T=X_1 \tstar  X_2$, for which $\max(T)=\max(T_{2})$, we define $d(T)=|X_2|$, the size of $X_2$ (the number of vertices).

 $$t=\qquad 
 \raise -10pt\hbox{ \begin{picture}(40,30)
       \put(10,6){\circle{3}}   \put(0,16){\circle*{3}}   \put(20,16){\circle*{3}}
       \put(10,6){\line(-1,1){10}}    \put(10,6){\line(1,1){10}}
       \put(10,-1){$\scriptscriptstyle 1$}   \put(-4,20){$\scriptscriptstyle 2$}   
\put(20,20){$\scriptscriptstyle 3$}
      \end{picture}}\qquad
 \raise -10pt\hbox{ \begin{picture}(20,30)
       \put(0,6){\circle{3}} \put(0,16){\circle*{3}} \put(0,26){\circle*{3}}
       \put(0,6){\line(0,1){20}}
       \put(3,5){$\scriptscriptstyle 1$} \put(3,17){$\scriptscriptstyle 2$} 
\put(3,27){$\scriptscriptstyle 3$}
      \end{picture}}\qquad
  \raise -10pt\hbox{ \begin{picture}(20,30)
       \put(0,6){\circle{3}}  \put(0,16){\circle*{3}} \put(0,26){\circle*{3}}
      \put(0,6){\line(0,1){10}}  \put(0,16){\line(0,1){10}}
       \put(3,5){$\scriptscriptstyle 2$}  \put(3,17){$\scriptscriptstyle 1$}  
\put(3,27){$\scriptscriptstyle 3$}
      \end{picture}}
      $$
      $$ d(t)=  \hskip 1cm  1 \hskip 1.7cm 2  \hskip 1.5cm 2  \hskip 1.1cm $$
We can now turn to the map $\widetilde{\Phi}:ComMag[S] \to \PP[S]$ given by 
$$\widetilde{\Phi}(x\cdot y) = \Phi(x) \tsharp \Phi(y):= \Phi(x) \tstar \Phi(y)+ \Phi(y) \tstar \Phi(x).$$
Our purpose is to show that it is injective. 
Given a tree $Y\in ComMag[S]$ the tree $\Psi(Y)\in\PP[S]$ is clearly in the support of  $\widetilde{\Phi}(Y)$. 
We want to show that the image of $\widetilde{\Phi}[S]$ is linearly isomorphic to $A[S]$. This would show the injectivity of our map. The principle of the proof is to show that in $\widetilde{\Phi}(T)$ it may appear several elements of $A[S]$ of various $d$ values, but there is only one of maximal $d$ value. 
For instance $\widetilde{\Phi}(1\cdot (2\cdot 3))$ contain only one element $T_1 \in A[S]$ where  $d(T_1)=2$, $\widetilde{\Phi}((1\cdot 2)\cdot 3)$ contain only one element $T_2 \in A[S]$ where  $d(T_2)=1$, while $\widetilde{\Phi}(2\cdot (1\cdot 3))$ contains two elements $T_2, T_3 \in A[S]$ where $d(T_3)=2$:
$$(1\cdot (2\cdot 3))\mapsto \qquad 
 \raise -10pt\hbox{ \begin{picture}(20,30)
       \put(0,6){\circle{3}}  \put(0,16){\circle*{3}} \put(0,26){\circle*{3}}
       \put(0,6){\line(0,1){20}}
       \put(3,5){$\scriptscriptstyle 1$}   \put(3,17){$\scriptscriptstyle 2$}   \put(3,27){$\scriptscriptstyle 3$}  
      \end{picture}} +\
       \raise -10pt\hbox{ \begin{picture}(20,30)
       \put(0,6){\circle{3}}  \put(0,16){\circle*{3}} \put(0,26){\circle*{3}}
       \put(0,6){\line(0,1){20}}
       \put(3,5){$\scriptscriptstyle 1$}   \put(3,17){$\scriptscriptstyle 3$}   \put(3,27){$\scriptscriptstyle 2$}  
             \end{picture}} +\ 
 \raise -10pt\hbox{ \begin{picture}(40,30)
       \put(10,6){\circle{3}}   \put(0,16){\circle*{3}}   \put(20,16){\circle*{3}}
       \put(10,6){\line(-1,1){10}}    \put(10,6){\line(1,1){10}}
       \put(10,-1){$\scriptscriptstyle 2$}   \put(-4,20){$\scriptscriptstyle 1$}   \put(20,20){$\scriptscriptstyle 3$}  
      \end{picture}} +\ 
       \raise -10pt\hbox{ \begin{picture}(40,30)
       \put(10,6){\circle{3}}   \put(0,16){\circle*{3}}   \put(20,16){\circle*{3}}
       \put(10,6){\line(-1,1){10}}    \put(10,6){\line(1,1){10}}
       \put(10,-1){$\scriptscriptstyle 3$}   \put(-4,20){$\scriptscriptstyle 1$}   \put(20,20){$\scriptscriptstyle 2$}  
      \end{picture}} 
 $$

$$((1\cdot 2)\cdot 3) \mapsto\qquad 
   \raise -10pt\hbox{ \begin{picture}(40,30)
       \put(10,6){\circle{3}}   \put(0,16){\circle*{3}}   \put(20,16){\circle*{3}}
       \put(10,6){\line(-1,1){10}}    \put(10,6){\line(1,1){10}}
       \put(10,-1){$\scriptscriptstyle 1$}   \put(-4,20){$\scriptscriptstyle 2$}   \put(20,20){$\scriptscriptstyle 3$}  
      \end{picture}} +\ 
       \raise -10pt\hbox{ \begin{picture}(40,30)
       \put(10,6){\circle{3}}   \put(0,16){\circle*{3}}   \put(20,16){\circle*{3}}
       \put(10,6){\line(-1,1){10}}    \put(10,6){\line(1,1){10}}
       \put(10,-1){$\scriptscriptstyle 2$}   \put(-4,20){$\scriptscriptstyle 1$}   \put(20,20){$\scriptscriptstyle 3$}  
      \end{picture}} +\ 
       \raise -10pt\hbox{ \begin{picture}(20,30)
       \put(0,6){\circle{3}}  \put(0,16){\circle*{3}} \put(0,26){\circle*{3}}
       \put(0,6){\line(0,1){20}}
       \put(3,5){$\scriptscriptstyle 3$}   \put(3,17){$\scriptscriptstyle 1$}   \put(3,27){$\scriptscriptstyle 2$}  
      \end{picture}} +\ 
       \raise -10pt\hbox{ \begin{picture}(20,30)
       \put(0,6){\circle{3}}  \put(0,16){\circle*{3}} \put(0,26){\circle*{3}}
       \put(0,6){\line(0,1){20}}
       \put(3,5){$\scriptscriptstyle 3$}   \put(3,17){$\scriptscriptstyle 2$}   \put(3,27){$\scriptscriptstyle 1$}  
             \end{picture}}
 $$

$$(2\cdot (1\cdot 3)) \mapsto\qquad 
  \raise -10pt\hbox{ \begin{picture}(20,30)
       \put(0,6){\circle{3}}  \put(0,16){\circle*{3}} \put(0,26){\circle*{3}}
       \put(0,6){\line(0,1){20}}
       \put(3,5){$\scriptscriptstyle 2$}   \put(3,17){$\scriptscriptstyle 1$}   \put(3,27){$\scriptscriptstyle 3$}  
      \end{picture}} +\ 
       \raise -10pt\hbox{ \begin{picture}(20,30)
       \put(0,6){\circle{3}}  \put(0,16){\circle*{3}} \put(0,26){\circle*{3}}
       \put(0,6){\line(0,1){20}}
       \put(3,5){$\scriptscriptstyle 2$}   \put(3,17){$\scriptscriptstyle 3$}   \put(3,27){$\scriptscriptstyle 1$}  
             \end{picture}} +\ 
   \raise -10pt\hbox{ \begin{picture}(40,30)
       \put(10,6){\circle{3}}   \put(0,16){\circle*{3}}   \put(20,16){\circle*{3}}
       \put(10,6){\line(-1,1){10}}    \put(10,6){\line(1,1){10}}
       \put(10,-1){$\scriptscriptstyle 1$}   \put(-4,20){$\scriptscriptstyle 2$}   \put(20,20){$\scriptscriptstyle 3$}  
      \end{picture}} +\ 
       \raise -10pt\hbox{ \begin{picture}(40,30)
       \put(10,6){\circle{3}}   \put(0,16){\circle*{3}}   \put(20,16){\circle*{3}}
       \put(10,6){\line(-1,1){10}}    \put(10,6){\line(1,1){10}}
       \put(10,-1){$\scriptscriptstyle 3$}   \put(-4,20){$\scriptscriptstyle 1$}   \put(20,20){$\scriptscriptstyle 2$}  
      \end{picture}}
 $$

We proceed by induction on $|S|$. 
If $|S|=1$, then $\widetilde{\Phi}$ is a linear isomorphism and the result is clear. Assume $|S|>1$.
Our induction hypothesis is that the image of $\widetilde{\Phi}[S']$ is isomorphic to $A[S']$ for all $|S'|<|S|$.
More precisely, we assume that the image of $\widetilde{\Phi}[S']$ is exactly $A[S']$ modulo terms in $B[S']$.
This is certainly true for $|S|=1$.

To obtain this, we order the trees of $A[S]$ (a basis) as follows. 
Let $T,T'$  be trees in $A[S]$. We say that $T<T'$ whenever $d(T)>d(T')$.
When $|S|>1$ we have that 
  $$ComMag[S] = \bigoplus_{S=S_1 + S_2 \atop \min(S_1)<\min(S_2)} ComMag[S_1]\cdot ComMag[S_2],$$
  where $S=S_1+S_2$ denotes a set partition of $S$ into two non-empty disjoint parts. 
  This implies that 
    $$ \widetilde{\Phi}[S] =  \bigoplus_{S=S_1 + S_2 \atop \min(S_1)<\min(S_2)} 
    \widetilde{\Phi}[S_1] \tsharp \widetilde{\Phi}[S_1],$$
where 
$V\tsharp W= V\tstar  W+V\tstar  W$. For $i=1$ or $2$, by induction hypothesis, we may assume that a basis of the image of $\widetilde{\Phi}[S_i]$ 
is of the form $\{X_i+W_i: X_i \in A[S_i]\}$ where  the $X_i$'s are single trees and $W_i \in B[S_i]$.  Let $T$ be any tree in $A[S]$. 
Using the decomposition in Eq~(\ref{eq:TX}), we have that there are unique $S=S_1+S_2$ and trees $X_1\in A[S_1]$ and $X_2\in A[S_2]$ such
that $T=X_1 \tstar  X_2$. By induction hypothesis, there are basis elements $X_1+W_1$ in the image of $\widetilde{\Phi}[S_1]$ and $X_2+W_2$ in the image of $\widetilde{\Phi}[S_2]$. By construction we have that 
\begin{equation} \label{eq:XW}
 (X_1+W_1)\tsharp  (X_2+W_2) = X_1 \tstar  X_2 + X_2 \tstar  X_1 + \cdots
\end{equation}
We study this equation. In the following let $b=\max (T)$. Remark that $b$ is necessarily a label in $X_2$

Let $T'=X_2 \tstar  X_1$.
If $T'\in A[S]$, then $b$ must be in a proper subtree $V_2$ of $T'$ and the decomposition as in Eq~(\ref{eq:TX}) for $T'$ is $T'=V_1\tstar V_2$. We remark that $V_2$ must be a proper subtree of $X_2$ (since the root of $T'$ is the root of $X_2$), hence $|X_2|>|V_2|$. This implies that $T<T'$.
It may happen that $T'\in B[S]$ but this does not cause any problem. 
We need to inspect the other terms in Eq~(\ref{eq:XW}). Let $Z_i$ be a tree in the support of $W_i$, in particular  $Z_i \in B[S_i]$.
For any tree $Y$, the tree $Z_i$ will eventually be a factor as in Eq~(\ref{eq:TX}) for the tree $Y\tstar Z_i$ (this may take more than one recursive step). This shows that  $(X_1+W_1) \tstar  W_2$ and  $(X_2+W_2) \tstar  W_1$ are both in $B[S]$. 
For elements of the form $Z_i\tstar Y$ we have to be more careful. 
For $Z=Z_1 \tstar  X_2$,  we have that $\max (Z)=b$ is in $X_2$, this implies that $Z=Z_1 \tstar  X_2$
is the unique decomposition of $Z$ as in Eq~(\ref{eq:TX}) so $Z\not\in A[S]$. This shows that $W_1\tstar X_2\in B[S]$.
Finally, for $Z=Z_2 \tstar  X_1$, if $Z\in A[S]$, then $b$ must be in a proper subtree $V_2$ of $Z$ and the decomposition as in Eq~(\ref{eq:TX}) for $Z$ is $Z=V_1\tstar V_2$. We remark that $V_2$ must be a proper subtree of $Z_2$, hence $|X_2|=|Z_2|>|V_2|$. This implies that $T<Z$. 
For all $T\in A[S]$, we have shown that there is an element in the image of $\widetilde{\Phi}[S]$ given by Eq~(\ref{eq:XW}) which takes the form
\begin{equation}
 (X_1+W_1)\tsharp  (X_2+W_2) = T + \sum_{T'\in A[S] \atop T<T'} c_{T'} T' + W
\end{equation}
where $W\in B[S]$.

We have produced elements in the image of $\widetilde{\Phi}[S]$ that are linearly independent (different leading terms), one for each tree $T\in A[S]$. Since $\dim(A[S])=\dim( ComMag[S])$ we have that $\widetilde{\Phi}[S]$ is injective.
\end{proof}

\bigskip


\section{ Comparison with associative, Jordan and NAP-algebras}

\subsection{$NAP$-algebras} The operation $\tstar$ used as an auxiliary operation in the proof of the main theorem satisfies the following relation:
$$(x \tstar y) \tstar z = (x \tstar z) \tstar y $$
for any labelled trees $x,y,z$. Algebras with one binary operation satisfying this identity already occured in the literature in the work of M.\ Livernet \cite{Livernet}, and were called $NAP$-algebras (for NonAssociative Perm). She showed that the labelled trees do describe the associated operad and that the product is precisely given by the operation $\tstar$ on trees described in the proof of Theorem \ref{mainthm}. Therefore, as a byproduct of the proof of the main theorem we obtain the following:

\begin{thm}\label{thmNAP} The symmetrization of the operation $\tstar$ induces an injection of operads
$$ComMag \to NAP.$$
\end{thm}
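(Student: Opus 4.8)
The plan is to recognize that the injectivity has, in effect, already been established inside the proof of Theorem~\ref{mainthm}. By Livernet's theorem \cite{Livernet}, the operad $NAP$ is realized on the labelled rooted trees: one has $NAP[S]=\PP[S]$ as vector spaces, and the operadic product is given \emph{precisely} by the operation $\tstar$ introduced in that proof. Under this identification, the morphism $ComMag\to NAP$ induced by symmetrization is, on each finite set $S$, exactly the linear map $\widetilde\Phi\colon ComMag[S]\to\PP[S]$ built from the commutative operation $\tsharp$ (the symmetrization of $\tstar$) and studied in Step~3. So the first move is simply to set up this dictionary and reduce the theorem to the injectivity of $\widetilde\Phi$.

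The crucial observation is that Steps~2 and~3 of the proof of Theorem~\ref{mainthm} are carried out entirely in terms of $\tstar$ and never use the pre-Lie product $*$ itself. The one place where the pre-Lie structure enters is Step~1, where the filtration by degree yields $T* Y=T\tstar Y+\textrm{ lower degree terms}$; its sole purpose is to reduce a question about $preLie$ to a question about $\tstar$. For $NAP$ this reduction is unnecessary, because $\tstar$ \emph{is} the operadic product. Hence I would drop Step~1 altogether and apply Steps~2 and~3 verbatim.

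Concretely, I would first invoke Step~2: the map $\Psi$ identifies the subset of trees whose linear span $A[S]$ is a bijective image of $ComMag[S]$, so that $\dim A[S]=\dim ComMag[S]$ and $\PP[S]=A[S]\oplus B[S]$. I would then reuse Step~3 unchanged: ordering $A[S]$ by the statistic $d$ (with $T<T'$ iff $d(T)>d(T')$), one shows that for each $T\in A[S]$ the symmetrization $\widetilde\Phi$ produces an element whose leading term for this order is exactly $T$, modulo terms lying in $B[S]$ and terms $T'\in A[S]$ with $T<T'$. These elements have pairwise distinct leading terms, hence are linearly independent; since their number is $\dim A[S]=\dim ComMag[S]$, the map $\widetilde\Phi$ is injective. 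Transported through the identification $NAP[S]=(\PP[S],\tstar)$, this is the asserted injectivity of $ComMag\to NAP$.

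The only genuine obstacle is the bridge itself, namely Livernet's result that $\tstar$ is the $NAP$ product on labelled trees; once it is cited, no new computation is required. In fact the argument for $NAP$ is slightly cleaner than that for $preLie$, precisely because the degree filtration of Step~1 is not needed and one works directly with the honest operadic product rather than its associated graded.
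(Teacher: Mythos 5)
Your proposal is correct and matches the paper's own argument: the paper likewise cites Livernet's identification of $NAP$ with labelled rooted trees under $\tstar$, and then observes that Steps~2 and~3 of the proof of Theorem~\ref{mainthm} (which concern only $\tstar$, the pre-Lie product entering solely through the Step~1 filtration) already establish injectivity of the symmetrized map, so the theorem is a byproduct. Your additional remark that Step~1 can be dropped for $NAP$ is exactly why the paper calls this a byproduct rather than giving a separate proof.
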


\subsection{Jordan algebras} Let  $x\cdot y :=xy + yx$ where $xy$ is an associative operation. It is easy to check that the 3 operations $(x\cdot y)\cdot z,\ (x\cdot z)\cdot y,\ (y\cdot z)\cdot x$ are linearly independent in the free associative algebra. However the 15 operations in arity 4 are not linearly independent: they satisfy the formula:
$$
(x\cdot y)\cdot (t\cdot z) + (x\cdot z)\cdot (t\cdot y)+(y\cdot z)\cdot (t\cdot x)=((x\cdot y)\cdot t)\cdot z+ ((x\cdot z)\cdot t)\cdot y+ ((y\cdot z)\cdot t)\cdot x.
$$
Since this formula is invariant under any permutation of the three variables $x,y,z$ it is equivalent, when 3 is invertible, to a formula in two variables only. It leads to the definition of a Jordan algebra which is a vector space equipped with a symmetric binary operation $a\cdot b$ satisfying the relation:
$$(a^{\cdot 2})\cdot (b\cdot a) = ((a^{\cdot 2})\cdot b)\cdot a\ .$$

Let us denote by $Jord$ the operad governing Jordan algebras. Since any symmetrized associative product satisfies the Jordan relation there is a morphism of operads
$$Jord \to As\ .$$
Contrarily to the Lie-As case and the ComMag-preLie case, this morphism is not injective. In other words there are relations satisfied by the symmetrized associative product which are not consequences of the Jordan relation. The first examples appear in arity 8: the Glennie relations, see p.\ 79 of \cite{ZSSS}.

\section{Conjecture on the factorization of $preLie$}\label{factorization}

\subsection{On the injectivity of $Lie\to As$} There are several ways of proving the injectivity of the morphism of operads  $Lie\to As$. One of them is to take advantage of the existence of the notion of ``cocommutative bialgebra''. Indeed, the Poincar\'e-Birkhoff-Witt theorem implies that there is an isomorphism of endofunctors of $Vect$
$$As = Com \circ Lie.$$

Therefore it is natural to look for a similar statement when replacing Lie-As by ComMag-preLie.

\subsection{Conjecture: splitting of $preLie$}  There exists an $\Sy$-module $\XXX$ such that 
$$preLie= \XXX \circ ComMag.$$
{}From this formula the dimension of the spaces $\XXX(n)$ would be 

$$1, 1, 3, 16, 120, 1146, 13258, \ldots, $$

\subsection{Black and red trees} Since the conjecture is only about the structure of $\Sy$-module of $preLie$ and since $NAP$ has the same underlying $\Sy$-module we could as well put $NAP$ in the statement. For the rest of the section we denote by $\PP$ the underlying $\Sy$-module of both $preLie$ and $NAP$.

Here are some arguments in favor of the conjecture. In this section we construct  a linear isomorphism
\begin{equation}\label{redblack}
\PP  [S] \longrightarrow  \bigoplus_{\phi\vdash S} \XXX[\phi] \times \prod_{R\in\phi} ComMag[R].
\end{equation}
where $\phi\vdash S$ denotes a set partition of $S$. Unfortunately, the family of spaces $\XXX$ that we construct is not defining an $\Sy$-module. 
We believe that our construction can be modified to realize this, but we have not yet found the right way to do so.

In Step~2 of Section~\ref{mainsection} we constructed an injection $\Psi\colon ComMag\to\PP$.
We say that a tree of $\PP$ that is in  the image of $\Psi$ is a tree of type $A$. In Claim 1 of Step 2 we have a characterization of the tree of type $A$.
To obtain the linear isomorphism in~(\ref{redblack}), we decompose 
(uniquely) any tree of $\PP $ into special trees (of type $\XXX$) labelled with trees of type $A$. 

To get this decomposition, we work recursively from the leaves down to the root.  We will mark
some edges red (double lines) and others will remain black. The black connected components will represent
the trees of type $A$ and the double red edges will connect them to form a tree.
We also have that a double red edge is always between the root of a black tree to the maximum 
in the black tree below connected to it by that double red edge.

If $T$ is a single leaf, then there are no edges to colour.
Now  assume we have  $T=(r,\{T_1,T_2,\ldots,T_\ell\})$ and all the trees $T_i$ for
$i\in\{1,2,\ldots,\ell\}$ have been colored. Let $T'_i$ be the black connected component
at the root of $T_i$. Let $b'_i=\max(T'_i)$ and assume $b'_1<b'_2<\cdots <b'_\ell$.
We assume (by construction up to that point) that
in $T_i$, all the double red edges connected to $T'_i$ are connected to $b'_i$.
If  $r>b'_1$ or if $r<b'_1$ and $T'_i\ne T_i$ for $1\le i< \ell$, then we mark double red all 
edges out of the root $r$  otherwise, we mark them black. We proceed to the next step.
This will mark the edges of $T$ double red or black. If we cut all marked edges, thanks to
Claim 1 we get a forest of trees of type $A$. They are connected by
marked edges and this structure forms a tree. This decomposition is unique.
Remark that the condition ensures that the bicolored tree $T$ that we obtain is black at $r$ 
only if all double red edges in the black
connected component of $r$ are connected to the maximum vertex of that component. Otherwise, 
the tree is double red at $r$.

The tree below show how the special double red marking works. Let
$$
T=\quad  \raise -10pt\hbox{ \begin{picture}(30,30)
       \put(0,6){\circle{3}} \put(-10,16){\circle*{3}} \put(10,16){\circle*{3}}
       \put(-10,26){\circle*{3}}
     \rouge{   \put(-1,6){\line(-1,1){10}}  \put(1,6){\line(-1,1){10}}  
                    \put(-1,6){\line(1,1){10}}   \put(1,6){\line(1,1){10}}
                    \put(-9,16){\line(0,1){10}}  \put(-11,16){\line(0,1){10}} }
       \put(-8,5){$\scriptscriptstyle 1$} \put(-7,15){$\scriptscriptstyle 3$}
       \put(10,18){$\scriptscriptstyle 4$}  \put(-13,30){$\scriptscriptstyle 2$}
       \end{picture}}
$$
At the root $1$ of $T$, we have that $1<3$ but the tree containing $3$ is not all black, 
so we are marking the edges red.
If we look at the tree
$$
\raise -10pt\hbox{ \begin{picture}(30,30)
       \put(0,6){\circle{3}} \put(-10,16){\circle*{3}} \put(10,16){\circle*{3}}
       \put(10,26){\circle*{3}}
       \put(0,6){\line(-1,1){10}}
       \put(0,6){\line(1,1){10}}
       \put(-8,5){$\scriptscriptstyle 1$} \put(-13,20){$\scriptscriptstyle 3$}
       \put(3,15){$\scriptscriptstyle 4$}  \put(10,30){$\scriptscriptstyle 2$}
    \rouge{       \put(9,16){\line(0,1){10}}   \put(11,16){\line(0,1){10}} }
      \end{picture}}
$$
at $1$, then we see that the subtree $3$ is all black and the only double red edges are connected to $4$ the maximum, 
so the edges out of $1$ are black.

For example we give a list of the decomposition for all trees of $\PP [\{1,2, 3,4\}]$ :
  $$
 \raise -10pt\hbox{ \begin{picture}(20,40)
       \put(0,6){\circle{3}} \put(0,16){\circle*{3}} \put(0,26){\circle*{3}} 
       \put(0,36){\circle*{3}}
       \put(0,6){\line(0,1){10}}
       \put(0,16){\line(0,1){10}}
       \put(0,26){\line(0,1){10}}
       \put(3,5){$\scriptscriptstyle 1$} \put(3,17){$\scriptscriptstyle 2$} 
       \put(3,27){$\scriptscriptstyle 3$}  \put(3,37){$\scriptscriptstyle 4$}
       \end{picture}}
 \raise -10pt\hbox{ \begin{picture}(20,40)
       \put(0,6){\circle{3}} \put(0,16){\circle*{3}} \put(0,26){\circle*{3}} 
       \put(0,36){\circle*{3}}
       \put(0,6){\line(0,1){10}}
       \put(0,16){\line(0,1){10}}
       \put(3,5){$\scriptscriptstyle 1$} \put(3,17){$\scriptscriptstyle 2$} 
       \put(3,27){$\scriptscriptstyle 4$}  \put(3,37){$\scriptscriptstyle 3$}
    \rouge{\put(-1,26){\line(0,1){10}} \put(1,26){\line(0,1){10}}}
       \end{picture}}
 \raise -10pt\hbox{ \begin{picture}(20,40)
       \put(0,6){\circle{3}} \put(0,16){\circle*{3}} \put(0,26){\circle*{3}} 
       \put(0,36){\circle*{3}}
       \put(0,6){\line(0,1){10}}
       \put(0,16){\line(0,1){10}}
       \put(0,26){\line(0,1){10}}
       \put(3,5){$\scriptscriptstyle 1$} \put(3,17){$\scriptscriptstyle 3$} 
      \put(3,27){$\scriptscriptstyle 2$}  \put(3,37){$\scriptscriptstyle 4$}
      \end{picture}}
 \raise -10pt\hbox{ \begin{picture}(20,40)
       \put(0,6){\circle{3}} \put(0,16){\circle*{3}} \put(0,26){\circle*{3}} 
       \put(0,36){\circle*{3}}
       \put(0,6){\line(0,1){10}}
       \put(0,16){\line(0,1){10}}
       \put(3,5){$\scriptscriptstyle 1$} \put(3,17){$\scriptscriptstyle 3$} 
       \put(3,27){$\scriptscriptstyle 4$}  \put(3,37){$\scriptscriptstyle 2$}
    \rouge{\put(-1,26){\line(0,1){10}} \put(1,26){\line(0,1){10}}}
       \end{picture}}
 \raise -10pt\hbox{ \begin{picture}(20,40)
       \put(0,6){\circle{3}} \put(0,16){\circle*{3}} \put(0,26){\circle*{3}} 
       \put(0,36){\circle*{3}}
       \put(0,6){\line(0,1){10}}
       \put(0,26){\line(0,1){10}}
       \put(3,5){$\scriptscriptstyle 1$} \put(3,17){$\scriptscriptstyle 4$} 
       \put(3,27){$\scriptscriptstyle 2$}  \put(3,37){$\scriptscriptstyle 3$}
    \rouge{   \put(-1,16){\line(0,1){10}}   \put(1,16){\line(0,1){10}} }
       \end{picture}}
 \raise -10pt\hbox{ \begin{picture}(20,40)
       \put(0,6){\circle{3}} \put(0,16){\circle*{3}} \put(0,26){\circle*{3}} 
       \put(0,36){\circle*{3}}
       \put(0,6){\line(0,1){10}}
       \put(3,5){$\scriptscriptstyle 1$} \put(3,17){$\scriptscriptstyle 4$} 
       \put(3,27){$\scriptscriptstyle 3$}  \put(3,37){$\scriptscriptstyle 2$}
    \rouge{\put(-1,16){\line(0,1){10}} \put(1,16){\line(0,1){10}}  \put(-1,26){\line(0,1){10}}   \put(1,26){\line(0,1){10}}}
       \end{picture}}
 \raise -10pt\hbox{ \begin{picture}(20,40)
       \put(0,6){\circle{3}} \put(0,16){\circle*{3}} \put(0,26){\circle*{3}} 
       \put(0,36){\circle*{3}}
       \put(0,6){\line(0,1){10}}
       \put(0,16){\line(0,1){10}}
       \put(0,26){\line(0,1){10}}
       \put(3,5){$\scriptscriptstyle 2$} \put(3,17){$\scriptscriptstyle 1$} 
       \put(3,27){$\scriptscriptstyle 3$}  \put(3,37){$\scriptscriptstyle 4$}
       \end{picture}}
 \raise -10pt\hbox{ \begin{picture}(20,40)
       \put(0,6){\circle{3}} \put(0,16){\circle*{3}} \put(0,26){\circle*{3}} 
       \put(0,36){\circle*{3}}
       \put(0,6){\line(0,1){10}}
       \put(0,16){\line(0,1){10}}
       \put(3,5){$\scriptscriptstyle 2$} \put(3,17){$\scriptscriptstyle 1$} 
       \put(3,27){$\scriptscriptstyle 4$}  \put(3,37){$\scriptscriptstyle 3$}
    \rouge{\put(-1,26){\line(0,1){10}} \put(1,26){\line(0,1){10}} }
       \end{picture}}
 \raise -10pt\hbox{ \begin{picture}(20,40)
       \put(0,6){\circle{3}} \put(0,16){\circle*{3}} \put(0,26){\circle*{3}} 
       \put(0,36){\circle*{3}}
       \put(0,6){\line(0,1){10}}
       \put(0,16){\line(0,1){10}}
       \put(0,26){\line(0,1){10}}
       \put(3,5){$\scriptscriptstyle 2$} \put(3,17){$\scriptscriptstyle 3$} 
       \put(3,27){$\scriptscriptstyle 1$}  \put(3,37){$\scriptscriptstyle 4$}
      \end{picture}}
 \raise -10pt\hbox{ \begin{picture}(20,40)
       \put(0,6){\circle{3}} \put(0,16){\circle*{3}} \put(0,26){\circle*{3}} 
       \put(0,36){\circle*{3}}
       \put(0,6){\line(0,1){10}}
       \put(0,16){\line(0,1){10}}
       \put(3,5){$\scriptscriptstyle 2$} \put(3,17){$\scriptscriptstyle 3$} 
       \put(3,27){$\scriptscriptstyle 4$}  \put(3,37){$\scriptscriptstyle 1$}
    \rouge{\put(-1,26){\line(0,1){10}} \put(1,26){\line(0,1){10}} }
       \end{picture}}
 \raise -10pt\hbox{ \begin{picture}(20,40)
       \put(0,6){\circle{3}} \put(0,16){\circle*{3}} \put(0,26){\circle*{3}} 
       \put(0,36){\circle*{3}}
       \put(0,6){\line(0,1){10}}
       \put(0,26){\line(0,1){10}}
       \put(3,5){$\scriptscriptstyle 2$} \put(3,17){$\scriptscriptstyle 4$} 
       \put(3,27){$\scriptscriptstyle 1$}  \put(3,37){$\scriptscriptstyle 3$}
    \rouge{   \put(-1,16){\line(0,1){10}}    \put(1,16){\line(0,1){10}} }
       \end{picture}}
 \raise -10pt\hbox{ \begin{picture}(20,40)
       \put(0,6){\circle{3}} \put(0,16){\circle*{3}} \put(0,26){\circle*{3}} 
       \put(0,36){\circle*{3}}
    \rouge{  \put(-1,16){\line(0,1){10}}   \put(1,16){\line(0,1){10}}   \put(-1,26){\line(0,1){10}}   \put(1,26){\line(0,1){10}}}
       \put(0,6){\line(0,1){10}}
       \put(3,5){$\scriptscriptstyle 2$} \put(3,17){$\scriptscriptstyle 4$} 
       \put(3,27){$\scriptscriptstyle 3$}  \put(3,37){$\scriptscriptstyle 1$}
       \end{picture}}
 $$

  $$
 \raise -10pt\hbox{ \begin{picture}(20,40)
       \put(0,6){\circle{3}} \put(0,16){\circle*{3}} \put(0,26){\circle*{3}} 
       \put(0,36){\circle*{3}}
       \put(0,6){\line(0,1){10}}
       \put(0,16){\line(0,1){10}}
       \put(0,26){\line(0,1){10}}
       \put(3,5){$\scriptscriptstyle 3$} \put(3,17){$\scriptscriptstyle 1$} 
       \put(3,27){$\scriptscriptstyle 2$}  \put(3,37){$\scriptscriptstyle 4$}
       \end{picture}}
 \raise -10pt\hbox{ \begin{picture}(20,40)
       \put(0,6){\circle{3}} \put(0,16){\circle*{3}} \put(0,26){\circle*{3}} 
       \put(0,36){\circle*{3}}
       \put(0,6){\line(0,1){10}}
       \put(0,16){\line(0,1){10}}
       \put(3,5){$\scriptscriptstyle 3$} \put(3,17){$\scriptscriptstyle 1$} 
       \put(3,27){$\scriptscriptstyle 4$}  \put(3,37){$\scriptscriptstyle 2$}
    \rouge{ \put(-1,26){\line(0,1){10}}  \put(1,26){\line(0,1){10}} }
       \end{picture}}
 \raise -10pt\hbox{ \begin{picture}(20,40)
       \put(0,6){\circle{3}} \put(0,16){\circle*{3}} \put(0,26){\circle*{3}} 
       \put(0,36){\circle*{3}}
       \put(0,6){\line(0,1){10}}
       \put(0,16){\line(0,1){10}}
       \put(0,26){\line(0,1){10}}
       \put(3,5){$\scriptscriptstyle 3$} \put(3,17){$\scriptscriptstyle 2$} 
       \put(3,27){$\scriptscriptstyle 1$}  \put(3,37){$\scriptscriptstyle 4$}
       \end{picture}}
 \raise -10pt\hbox{ \begin{picture}(20,40)
       \put(0,6){\circle{3}} \put(0,16){\circle*{3}} \put(0,26){\circle*{3}} 
       \put(0,36){\circle*{3}}
       \put(0,6){\line(0,1){10}}
       \put(0,16){\line(0,1){10}}
       \put(3,5){$\scriptscriptstyle 3$} \put(3,17){$\scriptscriptstyle 2$} 
       \put(3,27){$\scriptscriptstyle 4$}  \put(3,37){$\scriptscriptstyle 1$}
    \rouge{ \put(-1,26){\line(0,1){10}}  \put(1,26){\line(0,1){10}}}
       \end{picture}}
 \raise -10pt\hbox{ \begin{picture}(20,40)
       \put(0,6){\circle{3}} \put(0,16){\circle*{3}} \put(0,26){\circle*{3}} 
       \put(0,36){\circle*{3}}
       \put(0,6){\line(0,1){10}}
       \put(0,26){\line(0,1){10}}
       \put(3,5){$\scriptscriptstyle 3$} \put(3,17){$\scriptscriptstyle 4$} 
       \put(3,27){$\scriptscriptstyle 1$}  \put(3,37){$\scriptscriptstyle 2$}
     \rouge{   \put(-1,16){\line(0,1){10}}    \put(1,16){\line(0,1){10}}  }
      \end{picture}}
 \raise -10pt\hbox{ \begin{picture}(20,40)
       \put(0,6){\circle{3}} \put(0,16){\circle*{3}} \put(0,26){\circle*{3}} 
       \put(0,36){\circle*{3}}
       \put(0,6){\line(0,1){10}}
      \put(3,5){$\scriptscriptstyle 3$} \put(3,17){$\scriptscriptstyle 4$} 
      \put(3,27){$\scriptscriptstyle 2$}  \put(3,37){$\scriptscriptstyle 1$}
     \rouge{   \put(-1,16){\line(0,1){10}}  \put(1,16){\line(0,1){10}}   \put(-1,26){\line(0,1){10}}   \put(1,26){\line(0,1){10}}  }
       \end{picture}}
 \raise -10pt\hbox{ \begin{picture}(20,40)
       \put(0,6){\circle{3}} \put(0,16){\circle*{3}} \put(0,26){\circle*{3}} 
       \put(0,36){\circle*{3}}
       \put(0,16){\line(0,1){10}}
       \put(0,26){\line(0,1){10}}
       \put(3,5){$\scriptscriptstyle 4$} \put(3,17){$\scriptscriptstyle 1$} 
       \put(3,27){$\scriptscriptstyle 2$}  \put(3,37){$\scriptscriptstyle 3$}
     \rouge{  \put(-1,6){\line(0,1){10}}   \put(1,6){\line(0,1){10}} }
      \end{picture}}
 \raise -10pt\hbox{ \begin{picture}(20,40)
       \put(0,6){\circle{3}} \put(0,16){\circle*{3}} \put(0,26){\circle*{3}} 
       \put(0,36){\circle*{3}}
       \put(0,16){\line(0,1){10}}
       \put(3,5){$\scriptscriptstyle 4$} \put(3,17){$\scriptscriptstyle 1$} 
       \put(3,27){$\scriptscriptstyle 3$}  \put(3,37){$\scriptscriptstyle 2$}
     \rouge{  \put(-1,6){\line(0,1){10}}  \put(1,6){\line(0,1){10}}   \put(-1,26){\line(0,1){10}}   \put(1,26){\line(0,1){10}} }
      \end{picture}}
 \raise -10pt\hbox{ \begin{picture}(20,40)
       \put(0,6){\circle{3}} \put(0,16){\circle*{3}} \put(0,26){\circle*{3}} 
       \put(0,36){\circle*{3}}
       \put(0,16){\line(0,1){10}}
       \put(0,26){\line(0,1){10}}
       \put(3,5){$\scriptscriptstyle 4$} \put(3,17){$\scriptscriptstyle 2$} 
       \put(3,27){$\scriptscriptstyle 1$}  \put(3,37){$\scriptscriptstyle 3$}
     \rouge{  \put(-1,6){\line(0,1){10}}  \put(1,6){\line(0,1){10}} }
       \end{picture}}
 \raise -10pt\hbox{ \begin{picture}(20,40)
       \put(0,6){\circle{3}} \put(0,16){\circle*{3}} \put(0,26){\circle*{3}} 
       \put(0,36){\circle*{3}}
       \put(0,16){\line(0,1){10}}
       \put(3,5){$\scriptscriptstyle 4$} \put(3,17){$\scriptscriptstyle 2$} 
       \put(3,27){$\scriptscriptstyle 3$}  \put(3,37){$\scriptscriptstyle 1$}
     \rouge{  \put(-1,26){\line(0,1){10}}  \put(1,26){\line(0,1){10}}   \put(-1,6){\line(0,1){10}}  \put(1,6){\line(0,1){10}} }
       \end{picture}}
 \raise -10pt\hbox{ \begin{picture}(20,40)
       \put(0,6){\circle{3}} \put(0,16){\circle*{3}} \put(0,26){\circle*{3}} 
       \put(0,36){\circle*{3}}
       \put(0,26){\line(0,1){10}}
       \put(3,5){$\scriptscriptstyle 4$} \put(3,17){$\scriptscriptstyle 3$} 
       \put(3,27){$\scriptscriptstyle 1$}  \put(3,37){$\scriptscriptstyle 2$}
     \rouge{  \put(-1,6){\line(0,1){10}}   \put(1,6){\line(0,1){10}}    \put(-1,16){\line(0,1){10}}   \put(1,16){\line(0,1){10}}  }
       \end{picture}}
 \raise -10pt\hbox{ \begin{picture}(20,40)
       \put(0,6){\circle{3}} \put(0,16){\circle*{3}} \put(0,26){\circle*{3}} 
       \put(0,36){\circle*{3}}
     \rouge{  \put(-1,6){\line(0,1){10}}  \put(1,6){\line(0,1){10}} 
                   \put(-1,16){\line(0,1){10}}   \put(1,16){\line(0,1){10}}
                   \put(-1,26){\line(0,1){10}}  \put(1,26){\line(0,1){10}} }
       \put(3,5){$\scriptscriptstyle 4$} \put(3,17){$\scriptscriptstyle 3$} 
       \put(3,27){$\scriptscriptstyle 2$}  \put(3,37){$\scriptscriptstyle 1$}
       \end{picture}}
 $$

  $$
 \raise -10pt\hbox{ \begin{picture}(35,30)
       \put(0,6){\circle{3}} \put(0,16){\circle*{3}} \put(-10,26){\circle*{3}} 
       \put(10,26){\circle*{3}}
       \put(0,6){\line(0,1){10}}
       \put(0,16){\line(1,1){10}}
       \put(0,16){\line(-1,1){10}}
       \put(3,5){$\scriptscriptstyle 1$} \put(3,15){$\scriptscriptstyle 2$} 
       \put(-13,30){$\scriptscriptstyle 3$}  \put(10,30){$\scriptscriptstyle 4$}
      \end{picture}}
 \raise -10pt\hbox{ \begin{picture}(30,30)
       \put(0,6){\circle{3}} \put(0,16){\circle*{3}} \put(-10,26){\circle*{3}} 
       \put(10,26){\circle*{3}}
       \put(0,6){\line(0,1){10}}
     \rouge{    \put(-1,16){\line(1,1){10}}   \put(1,16){\line(1,1){10}}
                     \put(-1,16){\line(-1,1){10}}   \put(1,16){\line(-1,1){10}}  }
       \put(3,5){$\scriptscriptstyle 1$} \put(3,15){$\scriptscriptstyle 3$} 
       \put(-13,30){$\scriptscriptstyle 2$}  \put(10,30){$\scriptscriptstyle 4$}
       \end{picture}}
 \raise -10pt\hbox{ \begin{picture}(30,30)
       \put(0,6){\circle{3}} \put(0,16){\circle*{3}} \put(-10,26){\circle*{3}} 
       \put(10,26){\circle*{3}}
       \put(0,6){\line(0,1){10}}
     \rouge{   \put(-1,16){\line(1,1){10}}  \put(1,16){\line(1,1){10}}
                    \put(-1,16){\line(-1,1){10}}   \put(1,16){\line(-1,1){10}}  }
       \put(3,5){$\scriptscriptstyle 1$} \put(3,15){$\scriptscriptstyle 4$} 
       \put(-13,30){$\scriptscriptstyle 2$}  \put(10,30){$\scriptscriptstyle 3$}
      \end{picture}}
 \raise -10pt\hbox{ \begin{picture}(30,30)
       \put(0,6){\circle{3}} \put(0,16){\circle*{3}} \put(-10,26){\circle*{3}} 
       \put(10,26){\circle*{3}}
       \put(0,6){\line(0,1){10}}
       \put(0,16){\line(1,1){10}}
       \put(0,16){\line(-1,1){10}}
       \put(3,5){$\scriptscriptstyle 2$} \put(3,15){$\scriptscriptstyle 1$} 
       \put(-13,30){$\scriptscriptstyle 3$}  \put(10,30){$\scriptscriptstyle 4$}
      \end{picture}}
 \raise -10pt\hbox{ \begin{picture}(30,30)
       \put(0,6){\circle{3}} \put(0,16){\circle*{3}} \put(-10,26){\circle*{3}} 
       \put(10,26){\circle*{3}}
       \put(0,6){\line(0,1){10}}
     \rouge{   \put(-1,16){\line(1,1){10}}  \put(1,16){\line(1,1){10}}
                    \put(-1,16){\line(-1,1){10}}   \put(1,16){\line(-1,1){10}}  }
       \put(3,5){$\scriptscriptstyle 2$} \put(3,15){$\scriptscriptstyle 3$} 
       \put(-13,30){$\scriptscriptstyle 1$}  \put(10,30){$\scriptscriptstyle 4$}
      \end{picture}}
 \raise -10pt\hbox{ \begin{picture}(30,30)
       \put(0,6){\circle{3}} \put(0,16){\circle*{3}} \put(-10,26){\circle*{3}} 
       \put(10,26){\circle*{3}}
       \put(0,6){\line(0,1){10}}
     \rouge{   \put(-1,16){\line(1,1){10}}  \put(1,16){\line(1,1){10}}
                    \put(-1,16){\line(-1,1){10}}   \put(1,16){\line(-1,1){10}}  }
      \put(3,5){$\scriptscriptstyle 2$} \put(3,15){$\scriptscriptstyle 4$} 
      \put(-13,30){$\scriptscriptstyle 1$}  \put(10,30){$\scriptscriptstyle 3$}
      \end{picture}}
$$

  $$
 \raise -10pt\hbox{ \begin{picture}(30,30)
       \put(0,6){\circle{3}} \put(0,16){\circle*{3}} \put(-10,26){\circle*{3}} 
       \put(10,26){\circle*{3}}
       \put(0,6){\line(0,1){10}}
       \put(0,16){\line(1,1){10}}
       \put(0,16){\line(-1,1){10}}
       \put(3,5){$\scriptscriptstyle 3$} \put(3,15){$\scriptscriptstyle 1$} 
       \put(-13,30){$\scriptscriptstyle 2$}  \put(10,30){$\scriptscriptstyle 4$}
      \end{picture}}
 \raise -10pt\hbox{ \begin{picture}(30,30)
       \put(0,6){\circle{3}} \put(0,16){\circle*{3}} \put(-10,26){\circle*{3}} 
       \put(10,26){\circle*{3}}
     \rouge{   \put(-1,6){\line(0,1){10}}   \put(1,6){\line(0,1){10}}
                   \put(-1,16){\line(1,1){10}}   \put(1,16){\line(1,1){10}}
                   \put(-1,16){\line(-1,1){10}}    \put(1,16){\line(-1,1){10}}}
       \put(3,5){$\scriptscriptstyle 3$} \put(3,15){$\scriptscriptstyle 2$} 
       \put(-13,30){$\scriptscriptstyle 1$}  \put(10,30){$\scriptscriptstyle 4$}
      \end{picture}}
 \raise -10pt\hbox{ \begin{picture}(30,30)
       \put(0,6){\circle{3}} \put(0,16){\circle*{3}} \put(-10,26){\circle*{3}} 
       \put(10,26){\circle*{3}}
       \put(0,6){\line(0,1){10}}
     \rouge{   \put(-1,16){\line(1,1){10}}  \put(1,16){\line(1,1){10}}
                    \put(-1,16){\line(-1,1){10}}   \put(1,16){\line(-1,1){10}}  }
       \put(3,5){$\scriptscriptstyle 3$} \put(3,15){$\scriptscriptstyle 4$} 
       \put(-13,30){$\scriptscriptstyle 1$}  \put(10,30){$\scriptscriptstyle 2$}
      \end{picture}}
 \raise -10pt\hbox{ \begin{picture}(30,30)
       \put(0,6){\circle{3}} \put(0,16){\circle*{3}} \put(-10,26){\circle*{3}} 
       \put(10,26){\circle*{3}}
     \rouge{  \put(-1,6){\line(0,1){10}}  \put(1,6){\line(0,1){10}} }
       \put(0,16){\line(1,1){10}}
       \put(0,16){\line(-1,1){10}}
       \put(3,5){$\scriptscriptstyle 4$} \put(3,15){$\scriptscriptstyle 1$} 
       \put(-13,30){$\scriptscriptstyle 2$}  \put(10,30){$\scriptscriptstyle 3$}
      \end{picture}}
 \raise -10pt\hbox{ \begin{picture}(30,30)
       \put(0,6){\circle{3}} \put(0,16){\circle*{3}} \put(-10,26){\circle*{3}} 
       \put(10,26){\circle*{3}}
     \rouge{   \put(-1,6){\line(0,1){10}}   \put(1,6){\line(0,1){10}}
                   \put(-1,16){\line(1,1){10}}   \put(1,16){\line(1,1){10}}
                   \put(-1,16){\line(-1,1){10}}    \put(1,16){\line(-1,1){10}}}
       \put(3,5){$\scriptscriptstyle 4$} \put(3,15){$\scriptscriptstyle 2$} 
       \put(-13,30){$\scriptscriptstyle 1$}  \put(10,30){$\scriptscriptstyle 3$}
      \end{picture}}
 \raise -10pt\hbox{ \begin{picture}(30,30)
       \put(0,6){\circle{3}} \put(0,16){\circle*{3}} \put(-10,26){\circle*{3}} 
       \put(10,26){\circle*{3}}
     \rouge{   \put(-1,6){\line(0,1){10}}   \put(1,6){\line(0,1){10}}
                   \put(-1,16){\line(1,1){10}}   \put(1,16){\line(1,1){10}}
                   \put(-1,16){\line(-1,1){10}}    \put(1,16){\line(-1,1){10}}}
       \put(3,5){$\scriptscriptstyle 4$} \put(3,15){$\scriptscriptstyle 3$} 
       \put(-13,30){$\scriptscriptstyle 1$}  \put(10,30){$\scriptscriptstyle 2$}
      \end{picture}}
$$

  $$
 \raise -10pt\hbox{ \begin{picture}(30,30)
       \put(0,6){\circle{3}} \put(-10,16){\circle*{3}} \put(10,16){\circle*{3}} 
       \put(10,26){\circle*{3}}
       \put(0,6){\line(-1,1){10}}
       \put(0,6){\line(1,1){10}}
       \put(10,16){\line(0,1){10}}
       \put(-8,5){$\scriptscriptstyle 1$} \put(-13,20){$\scriptscriptstyle 2$}  
       \put(3,15){$\scriptscriptstyle 3$}  \put(10,30){$\scriptscriptstyle 4$}
      \end{picture}}
 \raise -10pt\hbox{ \begin{picture}(30,30)
       \put(0,6){\circle{3}} \put(-10,16){\circle*{3}} \put(10,16){\circle*{3}} 
       \put(10,26){\circle*{3}}
       \put(0,6){\line(-1,1){10}}
       \put(0,6){\line(1,1){10}}
     \rouge{   \put(9,16){\line(0,1){10}}    \put(11,16){\line(0,1){10}} }
       \put(-8,5){$\scriptscriptstyle 1$} \put(-13,20){$\scriptscriptstyle 2$}  
       \put(3,15){$\scriptscriptstyle 4$}  \put(10,30){$\scriptscriptstyle 3$}
      \end{picture}}
 \raise -10pt\hbox{ \begin{picture}(30,30)
       \put(0,6){\circle{3}} \put(-10,16){\circle*{3}} \put(10,16){\circle*{3}} 
       \put(10,26){\circle*{3}}
       \put(0,6){\line(-1,1){10}}
       \put(0,6){\line(1,1){10}}
       \put(10,16){\line(0,1){10}}
       \put(-8,5){$\scriptscriptstyle 1$} \put(-13,20){$\scriptscriptstyle 3$}  
       \put(3,15){$\scriptscriptstyle 2$}  \put(10,30){$\scriptscriptstyle 4$}
      \end{picture}}
 \raise -10pt\hbox{ \begin{picture}(30,30)
       \put(0,6){\circle{3}} \put(-10,16){\circle*{3}} \put(10,16){\circle*{3}} 
       \put(10,26){\circle*{3}}
       \put(0,6){\line(-1,1){10}}
       \put(0,6){\line(1,1){10}}
     \rouge{   \put(9,16){\line(0,1){10}}    \put(11,16){\line(0,1){10}} }
       \put(-8,5){$\scriptscriptstyle 1$} \put(-13,20){$\scriptscriptstyle 3$}  
       \put(3,15){$\scriptscriptstyle 4$}  \put(10,30){$\scriptscriptstyle 2$}
      \end{picture}}
 \raise -10pt\hbox{ \begin{picture}(30,30)
       \put(0,6){\circle{3}} \put(-10,16){\circle*{3}} \put(10,16){\circle*{3}} 
       \put(-10,26){\circle*{3}}
       \put(0,6){\line(-1,1){10}}
       \put(0,6){\line(1,1){10}}
       \put(-10,16){\line(0,1){10}}
       \put(-8,5){$\scriptscriptstyle 1$} \put(-7,15){$\scriptscriptstyle 2$}  
       \put(10,18){$\scriptscriptstyle 4$}  \put(-13,30){$\scriptscriptstyle 3$}
      \end{picture}}
 \raise -10pt\hbox{ \begin{picture}(30,30)
       \put(0,6){\circle{3}} \put(-10,16){\circle*{3}} \put(10,16){\circle*{3}} 
       \put(-10,26){\circle*{3}}
    \rouge{  \put(-1,6){\line(-1,1){10}} \put(1,6){\line(-1,1){10}}
                 \put(-1,6){\line(1,1){10}}  \put(1,6){\line(1,1){10}}
                 \put(-11,16){\line(0,1){10}}   \put(-9,16){\line(0,1){10}} }
       \put(-8,5){$\scriptscriptstyle 1$} \put(-7,15){$\scriptscriptstyle 3$}  
       \put(10,18){$\scriptscriptstyle 4$}  \put(-13,30){$\scriptscriptstyle 2$}
      \end{picture}}
$$

  $$
 \raise -10pt\hbox{ \begin{picture}(30,30)
       \put(0,6){\circle{3}} \put(-10,16){\circle*{3}} \put(10,16){\circle*{3}} 
       \put(10,26){\circle*{3}}
     \rouge{   \put(-1,6){\line(-1,1){10}}   \put(1,6){\line(-1,1){10}}
                    \put(-1,6){\line(1,1){10}}   \put(1,6){\line(1,1){10}} }
       \put(10,16){\line(0,1){10}}
       \put(-8,5){$\scriptscriptstyle 2$} \put(-13,20){$\scriptscriptstyle 1$}  
       \put(3,15){$\scriptscriptstyle 3$}  \put(10,30){$\scriptscriptstyle 4$}
      \end{picture}}
 \raise -10pt\hbox{ \begin{picture}(30,30)
       \put(0,6){\circle{3}} \put(-10,16){\circle*{3}} \put(10,16){\circle*{3}} 
       \put(10,26){\circle*{3}}
      \rouge{   \put(-1,6){\line(-1,1){10}}   \put(1,6){\line(-1,1){10}}
                    \put(-1,6){\line(1,1){10}}   \put(1,6){\line(1,1){10}}
                    \put(9,16){\line(0,1){10}}   \put(11,16){\line(0,1){10}} }
       \put(-8,5){$\scriptscriptstyle 2$} \put(-13,20){$\scriptscriptstyle 1$}  
       \put(3,15){$\scriptscriptstyle 4$}  \put(10,30){$\scriptscriptstyle 3$}
      \end{picture}}
 \raise -10pt\hbox{ \begin{picture}(30,30)
       \put(0,6){\circle{3}} \put(-10,16){\circle*{3}} \put(10,16){\circle*{3}} 
       \put(10,26){\circle*{3}}
       \put(0,6){\line(-1,1){10}}
       \put(0,6){\line(1,1){10}}
       \put(10,16){\line(0,1){10}}
       \put(-8,5){$\scriptscriptstyle 2$} \put(-13,20){$\scriptscriptstyle 3$}  
       \put(3,15){$\scriptscriptstyle 1$}  \put(10,30){$\scriptscriptstyle 4$}
      \end{picture}}
 \raise -10pt\hbox{ \begin{picture}(30,30)
       \put(0,6){\circle{3}} \put(-10,16){\circle*{3}} \put(10,16){\circle*{3}} 
       \put(10,26){\circle*{3}}
       \put(0,6){\line(-1,1){10}}
       \put(0,6){\line(1,1){10}}
      \rouge{     \put(9,16){\line(0,1){10}}     \put(11,16){\line(0,1){10}} }
       \put(-8,5){$\scriptscriptstyle 2$} \put(-13,20){$\scriptscriptstyle 3$}  
       \put(3,15){$\scriptscriptstyle 4$}  \put(10,30){$\scriptscriptstyle 1$}
      \end{picture}}
 \raise -10pt\hbox{ \begin{picture}(30,30)
       \put(0,6){\circle{3}} \put(-10,16){\circle*{3}} \put(10,16){\circle*{3}} 
       \put(-10,26){\circle*{3}}
       \put(0,6){\line(-1,1){10}}
       \put(0,6){\line(1,1){10}}
       \put(-10,16){\line(0,1){10}}
       \put(-8,5){$\scriptscriptstyle 2$} \put(-7,15){$\scriptscriptstyle 1$}  
       \put(10,18){$\scriptscriptstyle 4$}  \put(-13,30){$\scriptscriptstyle 3$}
      \end{picture}}
 \raise -10pt\hbox{ \begin{picture}(30,30)
       \put(0,6){\circle{3}} \put(-10,16){\circle*{3}} \put(10,16){\circle*{3}} 
       \put(-10,26){\circle*{3}}
    \rouge{  \put(-1,6){\line(-1,1){10}} \put(1,6){\line(-1,1){10}}
                 \put(-1,6){\line(1,1){10}}  \put(1,6){\line(1,1){10}}
                 \put(-11,16){\line(0,1){10}}   \put(-9,16){\line(0,1){10}} }
       \put(-8,5){$\scriptscriptstyle 2$} \put(-7,15){$\scriptscriptstyle 3$}  
       \put(10,18){$\scriptscriptstyle 4$}  \put(-13,30){$\scriptscriptstyle 1$}
      \end{picture}}
$$

  $$
 \raise -10pt\hbox{ \begin{picture}(30,30)
       \put(0,6){\circle{3}} \put(-10,16){\circle*{3}} \put(10,16){\circle*{3}} 
       \put(10,26){\circle*{3}}
     \rouge{   \put(-1,6){\line(-1,1){10}}   \put(1,6){\line(-1,1){10}}
                    \put(-1,6){\line(1,1){10}}   \put(1,6){\line(1,1){10}} }
       \put(10,16){\line(0,1){10}}
       \put(-8,5){$\scriptscriptstyle 3$} \put(-13,20){$\scriptscriptstyle 1$}  
       \put(3,15){$\scriptscriptstyle 2$}  \put(10,30){$\scriptscriptstyle 4$}
      \end{picture}}
 \raise -10pt\hbox{ \begin{picture}(30,30)
       \put(0,6){\circle{3}} \put(-10,16){\circle*{3}} \put(10,16){\circle*{3}} 
       \put(10,26){\circle*{3}}
      \rouge{   \put(-1,6){\line(-1,1){10}}   \put(1,6){\line(-1,1){10}}
                    \put(-1,6){\line(1,1){10}}   \put(1,6){\line(1,1){10}}
                    \put(9,16){\line(0,1){10}}   \put(11,16){\line(0,1){10}} }
       \put(-8,5){$\scriptscriptstyle 3$} \put(-13,20){$\scriptscriptstyle 1$}  
       \put(3,15){$\scriptscriptstyle 4$}  \put(10,30){$\scriptscriptstyle 2$}
      \end{picture}}
 \raise -10pt\hbox{ \begin{picture}(30,30)
       \put(0,6){\circle{3}} \put(-10,16){\circle*{3}} \put(10,16){\circle*{3}} 
       \put(10,26){\circle*{3}}
     \rouge{   \put(-1,6){\line(-1,1){10}}   \put(1,6){\line(-1,1){10}}
                    \put(-1,6){\line(1,1){10}}   \put(1,6){\line(1,1){10}} }
       \put(10,16){\line(0,1){10}}
       \put(-8,5){$\scriptscriptstyle 3$} \put(-13,20){$\scriptscriptstyle 2$}  
       \put(3,15){$\scriptscriptstyle 1$}  \put(10,30){$\scriptscriptstyle 4$}
      \end{picture}}
 \raise -10pt\hbox{ \begin{picture}(30,30)
       \put(0,6){\circle{3}} \put(-10,16){\circle*{3}} \put(10,16){\circle*{3}} 
       \put(10,26){\circle*{3}}
      \rouge{   \put(-1,6){\line(-1,1){10}}   \put(1,6){\line(-1,1){10}}
                    \put(-1,6){\line(1,1){10}}   \put(1,6){\line(1,1){10}}
                    \put(9,16){\line(0,1){10}}   \put(11,16){\line(0,1){10}} }
       \put(-8,5){$\scriptscriptstyle 3$} \put(-13,20){$\scriptscriptstyle 2$}  
       \put(3,15){$\scriptscriptstyle 4$}  \put(10,30){$\scriptscriptstyle 1$}
      \end{picture}}
 \raise -10pt\hbox{ \begin{picture}(30,30)
       \put(0,6){\circle{3}} \put(-10,16){\circle*{3}} \put(10,16){\circle*{3}} 
       \put(-10,26){\circle*{3}}
     \rouge{   \put(-1,6){\line(-1,1){10}}   \put(1,6){\line(-1,1){10}}
                    \put(-1,6){\line(1,1){10}}   \put(1,6){\line(1,1){10}} }
       \put(-10,16){\line(0,1){10}}
       \put(-8,5){$\scriptscriptstyle 3$} \put(-7,15){$\scriptscriptstyle 1$}  
       \put(10,18){$\scriptscriptstyle 4$}  \put(-13,30){$\scriptscriptstyle 2$}
      \end{picture}}
 \raise -10pt\hbox{ \begin{picture}(30,30)
       \put(0,6){\circle{3}} \put(-10,16){\circle*{3}} \put(10,16){\circle*{3}} 
       \put(-10,26){\circle*{3}}
    \rouge{  \put(-1,6){\line(-1,1){10}} \put(1,6){\line(-1,1){10}}
                 \put(-1,6){\line(1,1){10}}  \put(1,6){\line(1,1){10}}
                 \put(-11,16){\line(0,1){10}}   \put(-9,16){\line(0,1){10}} }
       \put(-8,5){$\scriptscriptstyle 3$} \put(-7,15){$\scriptscriptstyle 2$}  
       \put(10,18){$\scriptscriptstyle 4$}  \put(-13,30){$\scriptscriptstyle 1$}
      \end{picture}}
$$

  $$
 \raise -10pt\hbox{ \begin{picture}(30,30)
       \put(0,6){\circle{3}} \put(-10,16){\circle*{3}} \put(10,16){\circle*{3}} 
       \put(10,26){\circle*{3}}
     \rouge{   \put(-1,6){\line(-1,1){10}}   \put(1,6){\line(-1,1){10}}
                    \put(-1,6){\line(1,1){10}}   \put(1,6){\line(1,1){10}} }
       \put(10,16){\line(0,1){10}}
       \put(-8,5){$\scriptscriptstyle 4$} \put(-13,20){$\scriptscriptstyle 1$}  
       \put(3,15){$\scriptscriptstyle 2$}  \put(10,30){$\scriptscriptstyle 3$}
      \end{picture}}
 \raise -10pt\hbox{ \begin{picture}(30,30)
       \put(0,6){\circle{3}} \put(-10,16){\circle*{3}} \put(10,16){\circle*{3}} 
       \put(10,26){\circle*{3}}
      \rouge{   \put(-1,6){\line(-1,1){10}}   \put(1,6){\line(-1,1){10}}
                    \put(-1,6){\line(1,1){10}}   \put(1,6){\line(1,1){10}}
                    \put(9,16){\line(0,1){10}}   \put(11,16){\line(0,1){10}} }
       \put(-8,5){$\scriptscriptstyle 4$} \put(-13,20){$\scriptscriptstyle 1$}  
       \put(3,15){$\scriptscriptstyle 3$}  \put(10,30){$\scriptscriptstyle 2$}
      \end{picture}}
 \raise -10pt\hbox{ \begin{picture}(30,30)
       \put(0,6){\circle{3}} \put(-10,16){\circle*{3}} \put(10,16){\circle*{3}} 
       \put(10,26){\circle*{3}}
     \rouge{   \put(-1,6){\line(-1,1){10}}   \put(1,6){\line(-1,1){10}}
                    \put(-1,6){\line(1,1){10}}   \put(1,6){\line(1,1){10}} }
       \put(10,16){\line(0,1){10}}
       \put(-8,5){$\scriptscriptstyle 4$} \put(-13,20){$\scriptscriptstyle 2$}  
       \put(3,15){$\scriptscriptstyle 1$}  \put(10,30){$\scriptscriptstyle 3$}
      \end{picture}}
 \raise -10pt\hbox{ \begin{picture}(30,30)
       \put(0,6){\circle{3}} \put(-10,16){\circle*{3}} \put(10,16){\circle*{3}} 
       \put(10,26){\circle*{3}}
     \rouge{   \put(-1,6){\line(-1,1){10}}   \put(1,6){\line(-1,1){10}}
                    \put(-1,6){\line(1,1){10}}   \put(1,6){\line(1,1){10}}
                    \put(9,16){\line(0,1){10}}   \put(11,16){\line(0,1){10}} }
       \put(-8,5){$\scriptscriptstyle 4$} \put(-13,20){$\scriptscriptstyle 2$}  
       \put(3,15){$\scriptscriptstyle 3$}  \put(10,30){$\scriptscriptstyle 1$}
      \end{picture}}
 \raise -10pt\hbox{ \begin{picture}(30,30)
       \put(0,6){\circle{3}} \put(-10,16){\circle*{3}} \put(10,16){\circle*{2}} 
       \put(-10,26){\circle*{3}}
     \rouge{   \put(-1,6){\line(-1,1){10}}   \put(1,6){\line(-1,1){10}}
                    \put(-1,6){\line(1,1){10}}   \put(1,6){\line(1,1){10}} }
      \put(-10,16){\line(0,1){10}}
       \put(-8,5){$\scriptscriptstyle 4$} \put(-7,15){$\scriptscriptstyle 1$}  
       \put(10,18){$\scriptscriptstyle 3$}  \put(-13,30){$\scriptscriptstyle 2$}
      \end{picture}}
 \raise -10pt\hbox{ \begin{picture}(30,30)
       \put(0,6){\circle{3}} \put(-10,16){\circle*{3}} \put(10,16){\circle*{3}} 
       \put(-10,26){\circle*{3}}
    \rouge{  \put(-1,6){\line(-1,1){10}} \put(1,6){\line(-1,1){10}}
                 \put(-1,6){\line(1,1){10}}  \put(1,6){\line(1,1){10}}
                 \put(-11,16){\line(0,1){10}}   \put(-9,16){\line(0,1){10}} }
       \put(-8,5){$\scriptscriptstyle 4$} \put(-7,15){$\scriptscriptstyle 2$}  
       \put(10,18){$\scriptscriptstyle 3$}  \put(-13,30){$\scriptscriptstyle 1$}
      \end{picture}}
$$

  $$
 \raise -10pt\hbox{ \begin{picture}(35,30)
       \put(0,6){\circle{3}} \put(-10,16){\circle*{3}} \put(10,16){\circle*{3}} 
       \put(0,16){\circle*{3}}
       \put(0,6){\line(-1,1){10}}
       \put(0,6){\line(1,1){10}}
       \put(0,6){\line(0,1){10}}
       \put(-8,5){$\scriptscriptstyle 1$} \put(-13,20){$\scriptscriptstyle 2$}  
       \put(-2,20){$\scriptscriptstyle 3$}  \put(10,20){$\scriptscriptstyle 4$}
      \end{picture}}
 \raise -10pt\hbox{ \begin{picture}(35,30)
       \put(0,6){\circle{3}} \put(-10,16){\circle*{3}} \put(10,16){\circle*{3}} 
       \put(0,16){\circle*{3}}
     \rouge{   \put(-1,6){\line(-1,1){10}}  \put(1,6){\line(-1,1){10}}
                   \put(-1,6){\line(1,1){10}}    \put(1,6){\line(1,1){10}}
                   \put(-1,6){\line(0,1){10}}    \put(1,6){\line(0,1){10}}  }
       \put(-8,5){$\scriptscriptstyle 2$} \put(-13,20){$\scriptscriptstyle 1$}  
       \put(-2,20){$\scriptscriptstyle 3$}  \put(10,20){$\scriptscriptstyle 4$}
      \end{picture}}
 \raise -10pt\hbox{ \begin{picture}(35,30)
       \put(0,6){\circle{3}} \put(-10,16){\circle*{3}} \put(10,16){\circle*{3}} 
       \put(0,16){\circle*{3}}
     \rouge{   \put(-1,6){\line(-1,1){10}}  \put(1,6){\line(-1,1){10}}
                   \put(-1,6){\line(1,1){10}}    \put(1,6){\line(1,1){10}}
                   \put(-1,6){\line(0,1){10}}    \put(1,6){\line(0,1){10}}  }
       \put(-8,5){$\scriptscriptstyle 3$} \put(-13,20){$\scriptscriptstyle 1$}  
       \put(-2,20){$\scriptscriptstyle 2$}  \put(10,20){$\scriptscriptstyle 4$}
      \end{picture}}
 \raise -10pt\hbox{ \begin{picture}(35,30)
       \put(0,6){\circle{3}} \put(-10,16){\circle*{3}} \put(10,16){\circle*{3}} 
       \put(0,16){\circle*{3}}
     \rouge{   \put(-1,6){\line(-1,1){10}}  \put(1,6){\line(-1,1){10}}
                   \put(-1,6){\line(1,1){10}}    \put(1,6){\line(1,1){10}}
                   \put(-1,6){\line(0,1){10}}    \put(1,6){\line(0,1){10}}  }
       \put(-8,5){$\scriptscriptstyle 4$} \put(-13,20){$\scriptscriptstyle 1$}  
       \put(-2,20){$\scriptscriptstyle 2$}  \put(10,20){$\scriptscriptstyle 3$}
      \end{picture}}
$$

This decomposes each tree of $\PP [S]$ in a unique way. 
A more careful look at this decomposition gives us the following claim

\medskip
{\bf Claim 3} The subset of trees $\XXX[S]\subseteq \PP[S]$ that are all double red are characterized as follows. If $|S|=1$, then $\XXX[S] = \PP[S]$.
If $|S|>1$, then let $T=(r,\{T_1,T_2,\ldots,T_\ell\})$, let $r_i$ be the root of $T_i$ and assume $r_1<r_2<\cdots<r_\ell$.
We have $T\in\XXX[S]$ if and only if

(a) $T_i\in \XXX[S_i]$ for all $i\in\{1,2,\ldots,\ell\}$ and

(b) $r>r_1$ or ($r<r_1$ and $\ell>1$ and at least one $T_i$ for $i<\ell$ has size strictly bigger than 1). 

\medskip
\noindent
This gives us all the needed ingredients for the isomorphism in~(\ref{redblack}). For any $T\in \PP[S]$ the unique black and double red tree we obtain gives us a set partition $\phi\vdash S$ induced by the connected black components. On each black component we have a tree of type $A$. 
This decomposition also induces a double red tree on $\phi$ where each part $R\in\phi$ is identified with its maximal element. This gives us a bijection (of basis) between the trees in $\PP[S]$ and $\bigoplus_{\phi\vdash S} \XXX[\phi] \times \prod_{R\in\phi} ComMag[R]$. The inverse map is simply: connect the trees of type $A$ (identified to trees in $ComMag[R]$) according to $\XXX[\phi]$ from the root to the maximum of the tree it attaches to.

For example for
$$
T=\quad \raise -10pt\hbox{ \begin{picture}(30,30)
       \put(0,6){\circle{3}} \put(0,16){\circle*{3}} \put(-10,26){\circle*{3}}
       \put(10,26){\circle*{3}}
       \rouge{       \put(-1,6){\line(0,1){10}}     \put(1,6){\line(0,1){10}}}
       \put(0,16){\line(1,1){10}}
       \put(0,16){\line(-1,1){10}}
       \put(3,5){$\scriptscriptstyle 4$} \put(3,15){$\scriptscriptstyle 1$}
       \put(-13,30){$\scriptscriptstyle 2$}  \put(10,30){$\scriptscriptstyle 3$}
      \end{picture}}
$$
we have two connected components:
$\quad  \raise -20pt\hbox{ \begin{picture}(15,10)
      \put(0,16){\circle{3}} \put(-10,26){\circle*{3}} \put(10,26){\circle*{3}}
       \put(0,16){\line(1,1){10}}
       \put(0,16){\line(-1,1){10}}
       \put(3,15){$\scriptscriptstyle 1$} \put(-13,30){$\scriptscriptstyle 2$}
       \put(10,30){$\scriptscriptstyle 3$}
      \end{picture}}
\in A[\{1,2,3\}]$ and
$\raise -2pt\hbox{ \begin{picture}(8,10)
       \put(0,6){\circle{3}}
       \put(3,5){$\scriptscriptstyle 4$}
      \end{picture}} \in A[\{4\}]$. This gives us the following set partition $\{\{1,2,3\},\{4\}\}$.
We labelled them $3$ and $4$ respectively. The double red edges  induce the following tree
$\raise -10pt\hbox{ \begin{picture}(8,20)
       \put(0,6){\circle{3}} \put(0,16){\circle*{3}}
     \rouge{  \put(-1,6){\line(0,1){10}}  \put(1,6){\line(0,1){10}}  }
       \put(3,5){$\scriptscriptstyle 4$} \put(3,17){$\scriptscriptstyle 3$}
      \end{picture}}
\in \XXX[\{3,4\}]$. It is clear how to reconstruct $T$ from this data knowing that we have to connect the root of 
$\quad  \raise -20pt\hbox{ \begin{picture}(15,10)
      \put(0,16){\circle{3}} \put(-10,26){\circle*{3}} \put(10,26){\circle*{3}}
       \put(0,16){\line(1,1){10}}
       \put(0,16){\line(-1,1){10}}
       \put(3,15){$\scriptscriptstyle 1$} \put(-13,30){$\scriptscriptstyle 2$}
       \put(10,30){$\scriptscriptstyle 3$}
      \end{picture}}$
 to the maximal element in the tree 
$\raise -10pt\hbox{ \begin{picture}(8,20)
       \put(0,6){\circle{3}} \put(0,16){\circle*{3}}
     \rouge{  \put(-1,6){\line(0,1){10}}  \put(1,6){\line(0,1){10}} }
       \put(3,5){$\scriptscriptstyle 4$} \put(3,17){$\scriptscriptstyle 3$}
      \end{picture}}$.

\medskip
\noindent

\subsection{Bimodule structure} Comment from Vladimir Dotsenko: the behavior of the double red trees suggests that the $\Sy$-module $\XXX$ itself can be decomposed as 
$\XXX=ComMag\circ \ZZZ$ for some $\Sy$-module  $\ZZZ$. Computation of the power series agrees with this statement. That would make $preLie$ into a free bimodule over $ComMag$.

\subsection{Comment on the splitting of $preLie$ and $NAP$} 
Since $preLie$ (resp. $NAP$) and $ComMag$ are operads it is natural to ask oneself if there could
 be an operad structure on $\XXX$ which would be compatible with the other operad structures under
  the isomorphism of $\Sy$-modules. Analogously, it is natural to ask for ``good triples of operads'' 
  $(\XXX, preLie, ComMag)$ (resp.\ $(\XXX,NAP, ComMag)$) analogous to the good triple $(Com, As, Lie)$ , cf. \ \cite{JLLgbto}.


\section{Comparison with the operad of dendriform algebras}

 We give another proof of Theorem \ref{mainthm} based on the comparison between the operad $preLie$ and the operads $Dend$ and $Dup$. First, we recall results about dendriform algebras and their parametrized version. Second, we recall results of \cite{JLLgbto} Chapter 5 about the operad $Dup$ encoding duplicial algebras. Finally we give an alternative proof to the injectivity of the map $ComMag \to preLie$ .
 
\subsection{Parametrized dendriform algebras} Let $\lambda\in \KK$ be a parameter. We define a 
\emph{parametrized dendriform algebra} as a vector space $A$ equipped with two binary operations 
$$\g : A\otimes A \to A \quad \mathrm{and } \quad \d : A\otimes A \to A $$
called the \emph{left operation} and the \emph{right operation} respectively, satisfying the following three relations
\begin{displaymath}
\left\{\begin{array}{rcl}
(x\g y)\g z &=& x\g (y\g z)+ \lambda x\g (y\d z)   , \\
(x\d y)\g z &=& x\d (y\g z) , \\
\lambda(x\g y)\d z + (x\d y)\d z&=& x\d (y\d z).
\end{array}
\right .
\end{displaymath}

For $\lambda=1$ it is the notion of \emph{dendriform algebra} introduced in \cite{JLLnote, JLLdig} and for $\lambda=0$ it is the notion of \emph{duplicial algebra} introduced in \cite{JLLgbto, Ronco10}. The relevant operads are denoted by $Dend$ and $Dup$ respectively.

We introduce  the operation $\{x,y\}$ given by
$$\{x,y\} := x\g y - y \d x\ ,$$
and the operation $x\,\square\, y$ given by
$$x\,\square\, y:= x\g y - x\d y.$$
In the dendriform case $\{x,y\}$ is known to be a pre-Lie product.

\begin{lemma}\label{commdiag} The following square of categories of algebras, resp.\ of operads (cf.\ \cite{LV}), where the upper horizontal arrow is induced by $\{x,y\}$, the left vertical arrow is induced by $x\,\square\, y$ and the two other arrows are symmetrization,
$$\xymatrix{
Dend\alg \ar[r]\ar[d] & preLie\alg \ar[d] && Dend & preLie\ar[l]\\
Mag\alg \ar[r] & ComMag\alg  && Mag\ar[u] & ComMag\ar[l]\ar[u]\\
}$$
is commutative.
\end{lemma}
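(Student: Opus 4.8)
The plan is to observe that the square of operads and the square of algebra-categories encode exactly the same data, and then to reduce commutativity to a single one-line identity in the two dendriform operations. Recall that a morphism of operads induces contravariantly a functor between the corresponding categories of algebras, and that two such functors $Dend\alg\to ComMag\alg$ agree as soon as the two $ComMag$-products they produce coincide as binary operations on an arbitrary dendriform algebra $A$ (both functors being the identity on underlying spaces and altering only the operation). Thus everything comes down to comparing two binary operations built from $\g$ and $\d$.

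First I would check that each of the four arrows is well defined. The top arrow $Dend\alg\to preLie\alg$, $A\mapsto(A,\{-,-\})$, exists because $\{x,y\}=x\g y-y\d x$ is a pre-Lie product; this is the classical fact already quoted before the lemma, and it is here that the dendriform relations (with $\lambda=1$) are used. The left arrow $Dend\alg\to Mag\alg$, $A\mapsto(A,\square)$ with $x\,\square\,y=x\g y-x\d y$, is automatic, since $Mag$ imposes no relation whatsoever and any binary operation is magmatic. The two remaining arrows are symmetrizations: a product $\mu$ is sent to $\mu(x,y)+\mu(y,x)$, which is manifestly commutative and therefore defines a $ComMag$-structure, commutativity being the only relation to verify.

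Next I would compute the two composites on $A$. Along the top-then-right path the pre-Lie product $x*y=x\g y-y\d x$ is symmetrized to
$$x*y+y*x=(x\g y-y\d x)+(y\g x-x\d y)=x\g y+y\g x-x\d y-y\d x.$$
Along the left-then-bottom path the magmatic product $x\,\square\,y=x\g y-x\d y$ is symmetrized to
$$(x\,\square\,y)+(y\,\square\,x)=(x\g y-x\d y)+(y\g x-y\d x)=x\g y+y\g x-x\d y-y\d x.$$
The two outputs are literally the same operation, which is the asserted commutativity; dually, the generator of $ComMag$ has the single image $x\g y+y\g x-x\d y-y\d x$ in $Dend$ along both sides of the operad square.

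I do not expect a genuine obstacle here. The commutativity is a purely formal cancellation of the two $\d$-terms and uses neither the dendriform relations nor the specific value of $\lambda$, so the identity holds verbatim for every parameter and the same square commutes in the parametrized setting as well. The only non-formal ingredient is the well-definedness of the top arrow, namely that $\{x,y\}$ is pre-Lie, which is assumed known; the sole point demanding care is the bookkeeping of the contravariance between operads and algebras and of which intermediate product each symmetrization is applied to.
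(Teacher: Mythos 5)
Your proposal is correct and takes essentially the same route as the paper: the paper's entire proof is the single identity $x\,\square\, y + y\,\square\, x = (x\g y - x\d y)+(y\g x - y\d x) = (x\g y - y\d x)+(y\g x - x\d y) = \{x,y\}+\{y,x\}$, which is precisely your central computation. Your additional remarks (well-definedness of the arrows, independence of $\lambda$, the contravariance bookkeeping) are sound but supplementary to what the paper records.
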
 

\begin{proo} The commutativity of the diagram follows from the following equalities:
\begin{multline*}
x\,\square\, y + y\,\square\, x =(x\g y - x\d y) + (y\g x - y\d x) = \\
(x\g y - y \d x) + (y \g x - x\d y)= \{x, y\} + \{y,x\}.
\end{multline*} 
\end{proo}

\subsection{On duplicial algebras} 
As mentioned above a duplicial algebra is defined by two binary operations $x\g y$ and $x\d y$ satisfying three relations:
\begin{displaymath}
\left\{\begin{array}{rcl}
(x\g y)\g z &=& x\g (y\g z)   , \\
(x\d y)\g z &=& x\d (y\g z) , \\
 (x\d y)\d z&=& x\d (y\d z).
\end{array}
\right .
\end{displaymath}

There is a notion of generalized bialgebra which is duplicial as an algebra and coassociative as a coalgebra. The primitive part is an algebra encoded by the operad $Mag$. The generating operation of the magmatic operad is given by $x\square y = x\g y - x \d y$ in terms of the duplicial operations. Since the ``triple of operads'' $(Com, Dup, Mag)$ is good (Corollary 5.2.6 of \cite{JLLgbto}), there is an isomorphism $Dup \cong As\circ Mag$, and therefore the map $Mag\to Dup$ is injective.

\begin{lemma}\label{MagtoDend} The map of ns operads $Mag\to Dend$ induced by the operation $x\,\square\, y:= x\g y - x\d y$ is injective.
\end{lemma}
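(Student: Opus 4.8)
The plan is to deduce the injectivity of $Mag\to Dend$ from the injectivity of $Mag\to Dup$ just established, by regarding $Dend$ and $Dup$ as the two fibers $\lambda=1$ and $\lambda=0$ of the one-parameter family of parametrized dendriform operads $Dend_\lambda$ introduced above. For every value of $\lambda$ the operation $x\,\square\, y = x\g y - x\d y$ is defined, so it induces a morphism of ns operads $\Theta_\lambda\colon Mag\to Dend_\lambda$; by construction $\Theta_0$ is the map $Mag\to Dup$ (injective, as shown) and $\Theta_1$ is the map $Mag\to Dend$ of the statement. Thus it suffices to transport the injectivity of $\Theta_0$ to $\Theta_1$.

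First I would check that the family is \emph{flat}, i.e.\ that $\dim Dend_\lambda(n)$ is independent of $\lambda$ and equal to the common (Catalan) value of $\dim Dup(n)=\dim Dend(n)$. This is where the precise shape of the three defining relations matters: moving every term to one side and choosing the monomial order on arity-three tree-monomials for which the leading terms are $(x\g y)\g z$, $(x\d y)\g z$ and $(x\d y)\d z$ (ordering left combs above right combs, and breaking ties by the inner operation so that $(x\d y)\d z > (x\g y)\d z$), one sees that the two summands carrying the factor $\lambda$, namely $x\g(y\d z)$ in the first relation and $(x\g y)\d z$ in the third, are strictly smaller than the corresponding leading monomial. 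Hence the leading terms of the relations, and therefore the set of normal monomials, do \emph{not} depend on $\lambda$: the quadratic rewriting system is $\lambda$-uniform, and $Dend_\lambda(n)$ carries the $\lambda$-independent basis of normal monomials. In particular $Dend$ is a flat deformation of $Dup$.

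With flatness in hand I would phrase the transfer as an associated-graded argument. The flat degeneration supplies an exhaustive (bounded) filtration $F_\bullet$ on $Dend(n)$, read off from the $\lambda$-degree produced by the reductions above, whose associated graded operad is $\gr^F Dend\cong Dup$ and under which the dendriform operation $\g-\d$ has symbol the duplicial operation $\g-\d$. Consequently $\Theta_1\colon Mag\to Dend$ is a morphism of filtered vector spaces whose associated graded is exactly $\Theta_0\colon Mag\to Dup$. Since $\gr^F\Theta_1=\Theta_0$ is injective and the filtration is exhaustive and bounded, $\Theta_1$ is injective as well, which proves Lemma~\ref{MagtoDend}.

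The hard part is precisely this last transfer to the \emph{specific} value $\lambda=1$. A naive ``injectivity is an open condition'' argument, comparing the maximal minors of the matrix of $\Theta_\lambda$ over $\KK[\lambda]$, would only yield injectivity for generic $\lambda$ and would leave open the possibility that $\lambda=1$ is a degenerate value where some minor happens to vanish. The essential point to nail down is therefore that the leading-term data, equivalently the degeneration filtration realizing $\gr^F Dend\cong Dup$ together with $\gr^F(\g-\d)=\g-\d$, is genuinely uniform in $\lambda$, so that no information is lost at $\lambda=1$. Concretely, the technical heart is to globalize the arity-three computation: to verify that the chosen order provides a quadratic Gröbner basis for $Dend_\lambda$ simultaneously for all $\lambda$, so that the normal-monomial basis and the filtration are defined coherently in every arity.
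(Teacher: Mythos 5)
Your proposal follows the paper's skeleton up to the very last step: the paper also works with the $\lambda$-parametrized family, and it also rests on the flatness statement that the free $\lambda$-dendriform algebra on one generator has a $\lambda$-independent basis of planar binary trees (the paper obtains this not by Gr\"obner bases but by quoting the explicit isomorphism $\varphi$ of \cite{JLLdig}, which works verbatim for every $\lambda$; so the item you call the ``technical heart'' is actually the easier, already-available half). The genuine gap is in the transfer from $\lambda=0$ to $\lambda=1$, which you rightly single out as the hard part but do not close. A $\lambda$-independent set of leading terms yields $\gr\,Dend\cong\gr\,Dup\cong M$, where $M$ is the common \emph{monomial} operad obtained by killing $(x\g y)\g z$, $(x\d y)\g z$, $(x\d y)\d z$; it does \emph{not} exhibit $Dup$ itself as an associated graded of $Dend$. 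Moreover the degeneration to $M$ is useless for injectivity: the symbol of $\square=\g-\d$ is a single binary generator, and the induced morphism $Mag\to M$ kills $(x\cdot y)\cdot z$ (its image $(x\d y)\d z$ is a leading monomial, hence zero in $M$), so this associated graded detects nothing.

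Worse, the filtration you postulate, with $\gr^F Dend\cong Dup$ and $\gr^F(\g-\d)=\g-\d$, cannot exist. First, the family is not a graded degeneration: asking the three relations to be homogeneous for weights $w_{\g},w_{\d},w_{\lambda}$ forces $w_{\lambda}=w_{\g}-w_{\d}$ (first relation) and $w_{\lambda}=w_{\d}-w_{\g}$ (third relation), hence $w_{\lambda}=0$; consequently there is no Rees-type construction, and the ``$\lambda$-degree produced by the reductions'' is not a well-defined function on the $\lambda=1$ fiber. Second, there is a concrete arity-$3$ obstruction: with $e_1=x\g(x\g x)$, $e_2=x\g(x\d x)$, $e_3=(x\g x)\d x$, $e_4=(x\d x)\d x$, $e_5=(x\d x)\g x$, one computes
$$(x\,\square\,x)\,\square\,x= e_1-e_3+e_4-e_5+\lambda e_2,\qquad x\,\square\,(x\,\square\,x)= e_1-e_2+e_4-e_5+\lambda e_3,$$
and for the $\lambda=0$ parts to be the symbols of the $\lambda=1$ elements you would need $e_1,e_3,e_4,e_5$ to share one weight with $e_2$ strictly lower, and simultaneously $e_1,e_2,e_4,e_5$ to share one weight with $e_3$ strictly lower---a contradiction. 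So, granting flatness, your argument proves injectivity only for generic $\lambda$, which is exactly the insufficiency you flagged. Closing it at $\lambda=1$ requires a different input: the paper does so by identifying $Dend_n$ and $Dup_n$ with the common space $\KK[PBT_{n+1}]$ and comparing the two copies of $Mag$ there (note that even this step is delicate, since, as the displayed expansions show, the coordinates of the $\square$-monomials in the tree basis genuinely depend on $\lambda$); alternatively one can run the Gr\"obner machinery directly on $Mag\to Dend$ at $\lambda=1$, exhibiting distinct leading terms for the normal forms of the $\square$-monomials, in the spirit of \cite{Dotsenko}.
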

\begin{proo} The free $\lambda\textrm{-}Dend$-algebra on a generator $x$, that is  
$\lambda\textrm{-}Dend(x)\cong \bigoplus_{n\geq 1}\lambda\textrm{-}Dend_{n}$, is isomorphic to 
$\bigoplus_{n\geq 1}\KK[PBT_{n+1}]$. The proof is the same as the proof for $\lambda =1$ given in \cite{JLLdig}. The inverse of this isomorphism, which we denote by $\varphi$, is obtained as follows:
$$\varphi(\arbreA) = x,\quad  \varphi(r\vee s) = \varphi(r)\d x \g \varphi(s)$$
where $r$ and $s$ are planar binary trees.

Letting $\lambda =1$, resp.\ $\lambda =0$, we get, for any $n$,  isomorphisms of vector spaces
$$Dend_{n} \cong \KK[PBT_{n+1}]\cong Dup_{n}.$$
We consider, in  $Dend(x)$ and $Dup(x)$ respectively, the subspace generated by $x$ under the operation $\square$. It give rise to the following commutative diagram (of graded vector spaces)
$$\xymatrix{
 & Mag \ar[dl]_{\square} \ar[dr]^{\square}& \\
 Dend &\cong & Dup
 }$$
Since the map  $Mag\to Dup$  is injective so is the map $Mag\to Dend$.
\end{proo}

\subsection{Alternative proof of Theorem \ref{mainthm}}  
Since the map of operads $ComMag \to Mag$ is obviously injective, Lemma \ref{MagtoDend} implies that the composite 
$ComMag \to Mag\to Dend$ is injective. Since this is also the composite $ComMag \to PreLie\to Dend$ by Lemma \ref{commdiag}, the map $ComMag \to PreLie$ is injective. So, we are done with the second proof.

\vskip0.5cm
Observe that $Prelie\to Dend$ is also injective, it has been proved by M.\ Ronco in \cite{Ronco02}.

\subsection{Splitting of $Dend$} Since there is an injection $ComMag \to Dend$, it is natural to ask the
 same question as in section \ref{factorization}: does there exist an $\Sy$-module $\mathcal{Y}$ such that
 $$Dend= \mathcal{Y}\circ ComMag\ ?$$
If so, the dimensions of $\mathcal{Y}(n)$ are 
$$1, 3, 18, 168, 2130, \ldots\ .$$
More precisely its generating series is $f^{ \mathcal{Y}}(t)= \frac{1}{1-3t-t^3}$.

Since, as an $\Sy$-module, $Dend=Dup$, and since by \cite{JLLgbto} we know that $Dup=As\circ Mag$, it would suffice to find $\YY$ such that $Mag=\YY\circ ComMag$.

\end{document}